\newcommand{\rrVert}{\Vert}
\newcommand{\rrvert}{\vert}
\newcommand{\llVert}{\Vert}
\newcommand{\llvert}{\vert}
\newcommand{\binom}[2]{\pmatrix{#1\cr #2}}
\newcommand{\llbracket}{[\![}
\newcommand{\rrbracket}{]\!]}
\newtheorem{theorem}{Theorem}[section]
\newtheorem{lem}[theorem]{Lemma}
\newtheorem{prop}[theorem]{Proposition}
\newtheorem{cor}[theorem]{Corollary}
\begin{document}
\begin{frontmatter}

\title{The dual tree of a recursive triangulation of~the~disk\thanksref{T1}}
\runtitle{The dual tree of a recursive triangulation of the disk}

\begin{aug}
\author[A]{\fnms{Nicolas} \snm{Broutin}\corref{}\ead[label=e1]{nicolas.broutin@inria.fr}}
\and
\author[B]{\fnms{Henning} \snm{Sulzbach}\ead[label=e2]{henning.sulzbach@gmail.com}}
\runauthor{N. Broutin and H. Sulzbach}
\affiliation{INRIA Paris-Rocquencourt and McGill University}
\address[A]{INRIA Paris-Rocquencourt\\
Domaine de Voluceau\\
78153 Le Chesnay\\
France\\
\printead{e1}} 
\address[B]{School of Computer Science\\
McGill University\\
3480 University Street\\
Montreal\\
Canada H3A 0E9\\
\printead{e2}}
\end{aug}
\thankstext{T1}{Supported by MAEE and MESR Procope Grant 23133PG (MAEE and MESR) and DAAD Procope Project 50085686.}

\received{\smonth{11} \syear{2012}}
\revised{\smonth{10} \syear{2013}}

%
\begin{abstract}
In the recursive lamination of the disk, one tries to add chords one after
another at random; a chord is kept and inserted if it does not
intersect any of the
previously inserted ones. Curien and Le Gall [\textit{Ann. Probab.} \textbf{39} (2011)
2224--2270] have proved that the set of chords converges to a limit triangulation
of the disk encoded
by a continuous process $\mathscr M$. Based on a new approach
resembling ideas from the
so-called contraction method in function spaces, we prove that, when
properly rescaled,
the planar dual of the discrete lamination converges almost surely in
the Gromov--Hausdorff
sense to a limit real tree $\mathscr T$, which is encoded by $\mathscr M$.
This confirms a conjecture of Curien and Le Gall.
\end{abstract}

%
\begin{keyword}[class=AMS]
\kwd[Primary ]{60C05}
\kwd{60F17}
\kwd{05C05}
\kwd[; secondary ]{11Y16}
\kwd{05A15}
\kwd{05A16}
\end{keyword}
\begin{keyword}
\kwd{Real tree}
\kwd{Gromov--Hausdorff convergence}
\kwd{functional limit theorem}
\kwd{contraction method}
\end{keyword}

\end{frontmatter}

\section{Introduction and main results}\label{secintro}
In \cite{legalcu}, Curien and Le Gall introduce the model of \textit{random recursive
triangulations} of the disk. The construction goes as follows:
at $n=1$, two points are sampled independently with uniform
distribution on the circle.
They are connected by a chord (a straight line) which
splits the disk into two fragments. Later on, at each step, two
independent points are
sampled uniformly at
random on the circle and are connected by a chord if the latter does
not intersect any of
the previously inserted chords; in other words the two points are
connected by a chord if
they both fall in the same fragment. This gives rise to a sequence of
\emph{laminations} of
the disk; for us a lamination will be a collection of chords which may
only intersect at
their end points. At time $n$, the lamination $\mathfrak L_n$ consists
of the
union of the chords
inserted up to time $n$. As an increasing closed subset of the disk,
$\mathfrak L_n$ converges,
it is proved in \cite{legalcu} that
\[
\mathfrak L_\infty=\overline{\bigcup_{n\ge1}
\mathfrak L_n}
\]
is a triangulation of the disk in the sense that any face of the
complement is an open
triangle whose vertices lie on the circumference of the circle (see
\cite{al94circle}).
Curien and Le~Gall \cite{legalcu} then study thoroughly the limit
triangulation $\mathfrak L_\infty$;
in particular, they compute the Hausdorff dimension of $\mathfrak
L_\infty$ using
a representation of the limit based on an encoding by a random function.
The main purpose of the present paper is to study the tree that is dual
to the lamination (as in planar dual). In particular, we prove that, seen
as a metric space equipped with the graph distance suitably rescaled,
the planar dual of
the lamination $\mathfrak L_n$ converges almost surely to a nondegenerate
random metric space $\mathscr T$, hence confirming a conjecture of
Curien and
Le~Gall \cite{legalcu}.
Before we go further in our description or our results and approach
(Section~\ref{secmainresults}), we introduce the relevant notation
and terminology.

\subsection{Laminations, dual trees, encoding functions and convergence}

\subsubsection*{Setting and notation}
We consider the disk $\mathscr D:=\{z\in\mathbb{C}\dvtx  2\pi|z|\le1\}$
and the circle
$\mathscr C= \{z \in\mathbb{C}\dvtx  2 \pi|z|= 1 \}$ as subsets of the
complex plane.
For convenience, the circle $\mathscr C$ is identified with the unit interval
where the
points $0$ and $1$ have been glued: we identify $s \in[0,1]$
with the point $\frac{1}{2 \pi} \exp(2 \pi i s) \in\mathscr C$.
Accordingly, we let $\llbracket x,y \rrbracket$ denote the (closed)
straight chord
joining the two points
of $\mathscr C$ corresponding to $x,y\in[0,1]$, $x<y$.
At some time $n$, we let $\mathfrak L_n$ be the collection of inserted chords.
The set
$\mathscr D\setminus\mathfrak L_n$ consists of a number of connected
components that
we call
\emph{fragments}; the \emph{mass} of a given fragment is the Lebesgue measure
of its intersection with the circle $\mathscr C$.

\subsubsection*{The lamination encoded by a function}
The key to studying the lamination $\mathfrak L_n$ and its dual tree is an
encoding by a
function, as in the pioneering work by Aldous~\cite{al94circle,al94}.
Let $C_n(s)$ denote the
number of chords in $\mathfrak L_n$ which intersect the straight line going
through the points
$0$ and $s$ of the circle. A priori, for any \mbox{$n \geq1$}, $C_n(s)$ is
not properly
defined at endpoints of chords, and we fix this issue by considering it
as a
right-continuous step function.
This convention enables us to regard every relevant process on the unit
interval throughout the paper as c{\`a}dl{\`a}g (right-continuous with
left-limits) and continuous at $1$, and we will do so. 
The function~$C_n$ encodes the lamination $\mathfrak L_n$ in the
following sense.

For a function $f\dvtx  [0,1] \to[0, \infty)$ with $f(0) = f(1) = 0$ having
c{\`a}dl{\`a}g
paths, one defines a lamination $\mathfrak L_f$ as follows. Given
$x,y\in
(0,1)$, with $x<y$,
the chord $\llbracket x,y \rrbracket$ is said to be \emph{compatible}
with $f$, or
$f$-compatible, if there
exists $w$ such that
\[
\forall s\in(x,y)\qquad\mbox{one has }f(s)>w\quad\mbox{and}\quad \max\bigl\{
f(x-), f(y)\bigr\}\le w.
\]
Then we define the lamination $\mathfrak L_f$ as the smallest compact subset
of the disk which
contains all the chords which are compatible with $f$. ($\mathfrak L_f$
is the
set of chords which
are either compatible with $f$, or the limit of compatible chords for
the Hausdorff metric.)
This definition is consistent with the ones in \cite{LePa2008a,legalcu,Ko2011b} for the
case of continuous excursions. Then the
\emph{height processes} $(C_n)_{n\ge1}$ encodes the laminations
$(\mathfrak L
_n)_{n\ge1}$
in the sense that $\mathfrak L_n=\mathfrak L_{C_n}$ for every $n\ge1$.
Laminations
are seen
as compact subsets of the disk $\mathscr D$, and we use the
Hausdorff distance $\mathrm{d_{\mathrm{H}}}$ to compare them. To fix
the notation, recall
that for
two compact subsets $A$ and $B$ of $\mathscr D$, we have
\[
\mathrm{d_{\mathrm{H}}}(A,B) = \inf\bigl\{\varepsilon> 0\dvtx  A^\varepsilon
\subseteq B, B^\varepsilon \subseteq A \bigr\},
\]
where $A^\varepsilon= \{x \in\mathscr D\dvtx  |x-a| < \varepsilon$ for some $a \in A \}$.

%
\begin{figure}[b]

\includegraphics{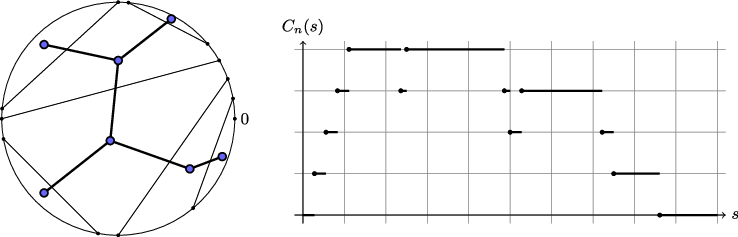}

\caption{A lamination, its right-continuous height process and the
corresponding rooted dual tree. Distances in the tree correspond to the
number of chords separating fragments in the lamination.}\label{figlam-dual}
\end{figure}

\subsubsection*{Trees encoded by functions}
Our main concern is the tree $T_n$ that is dual to the lamination
$\mathfrak L
_n$, and
its scaling limit as $n\to\infty$. Each fragment in $\mathfrak L_n$ is
associated with a node
and two nodes $u$ and $v$ are connected in $T_n$ in the tree if and
only if
the corresponding fragments $S_n(u)$ and $S_n(v)$ share a chord $\ell$
of $\mathfrak L_n$
[more precisely $S_n(u)\cup S_n(v)\cup\ell$ is a connected component
of $\mathscr D$].
Let $d_n$ be the graph distance in $T_n$, which
comes with a natural distinguished point---the root---the fragment whose
intersection with the circle $\mathscr C$ contains the point $0$.
The encoding of laminations by functions turns out to also encode the
dual tree.
The value of the encoding function $C_n(s)$ at a given point $s\in
[0,1]$ is precisely
the height in $T_n$ (distance to the root) of the node corresponding to
the face whose
intersection with the circle contains the point $s$; see Figure~\ref{figlam-dual}.
More precisely, the function $C_n$
actually encodes the metric structure of the dual tree $T_n$ in the
following sense
\mbox{\cite{DuLe2005a,Aldous1993a,evans,LeGall2005}}. Consider a {c\`adl\`ag} function
$f\dvtx [0,1]\to[0,\infty)$ such that $f(0)=f(1)=0$ and $f(s)>0$ for all
$s\in(0,1)$.
Define $d_f:=[0,1]^2\to[0,\infty)$ by
\[
d_f(x,y)=f(x) + f(y) - 2 \inf\bigl\{f(s)\dvtx  x\wedge y \le s \le x
\vee y \bigr\}.
\]
One easily verifies that $d_f$ is a pseudo-metric on $[0,1]$. Let
$x\sim y$ if $d_f(x,y)=0$.
Write $\mathcal T_f$ for the quotient $[0,1]/ \sim$ and consider the metric
space $(\mathcal T_f, d_f)$. Then $(\mathcal T_{C_n}, d_{C_n})$ is
isometric to the
dual tree $(T_n,d_n)$.

\subsubsection*{Real trees and Gromov--Hausdorff convergence} The natural scaling limits for large trees are \emph{real trees},
which are
encoded by continuous functions.
A compact metric space $(X,d)$ is called a real tree if it is geodesic
and acyclic:
\begin{itemize}
\item for every $x,y\in X$ there exists a unique isometry $\phi
_{x,y}\dvtx [0,d(x,y)]\to X$ such that $\phi_{xy}(0)=x$ and $\phi
_{xy}(d(x,y))=y$, and
\item if $q$ is a continuous injective map from $[0,1]$ to $X$ such
that $q(0)=x$ and $q(1)=y$ then $q([0,1])=\phi_{x,y}([0,d(x,y)])$.
\end{itemize}
For a {c\`adl\`ag} function $f$ with the properties above, the metric
space $(\mathcal T_f, d_f)$ is a~real
tree.

Given two compact metric spaces $(X,d)$ and $(X',d')$, one defines the
Gromov--Hausdorff
distance $\mathrm{d_{\mathrm{GH}}}(X,X')$ between $X$ and $X'$ to be
the infimum of all quantities
$\delta_{\mathrm H}(\phi(X),\phi'(X'))$ ranging over the choice of
compact metric
spaces $(Z,\delta)$, and isometries $\phi\dvtx X\to Z$ and $\phi'\dvtx X'\to
Z$, where
$\delta_{\mathrm H}$ denotes the Hausdorff distance in $Z$. The
distance $\mathrm{d_{\mathrm{GH}}}$ is a
pseudo-metric between compact metric spaces, and induces a metric on
the quotient space
which identifies two metric spaces if they are isometric, see, for example,
\cite{LeGall2005,evans,Gromov1999}.

Comparing Hausdorff convergence of laminations and Gromov--Hausdorff
convergence of
their dual trees, the trees (or the height processes) are arguably the important
objects: convergence of a sequence of increasing laminations as a
subset of the
complex plane only concerns the set of inserted chords, the time-scale
and order in
which they are inserted is completely irrelevant. Conversely,
convergence of the
(rescaled) height function implies convergence of the lamination under
suitable mild
additional assumptions; see Section~\ref{secconvlam}.

\subsection{Main results and general approach}\label{secmainresults}

Using the theory of fragmentation processes \cite{Bertoin2006}, Curien
and Le~Gall \cite{legalcu} prove
that there exists a random continuous process $\mathscr M$ which
encodes the
limiting triangulation in the sense that $\mathfrak L_\infty$
is distributed like $\mathfrak L_{\mathscr M}$.
For this, they prove pointwise convergence of the encoding functions:
for every $s\in[0,1]$, we have $n^{-\beta/2} C_n(s) \to\mathscr M(s)$
in probability, as $n\to\infty$, where the constant $\beta$ is given by
%
\begin{equation}
\label{eqdef-beta} \beta= \frac{\sqrt{17}-3}{2}=0.561552\ldots.
\end{equation}
They also show that for any $\varepsilon> 0$, almost surely, the process
$\mathscr M$ is
$(\beta-\varepsilon)$-H\"older continuous and for any $s\in[0,1]$ we have
%
\begin{equation}
\label{eqfree-const} \mathbf{E} \bigl[\mathscr M(s)\bigr] = \kappa\bigl(s(1-s)
\bigr)^\beta
\end{equation}
for some constant $\kappa> 0$ which was not identified in \cite{legalcu}.
Finally, the random function $\mathscr M$ inherits the recursive
structure of
the lamination
process and satisfies the
following distributional fixed-point equation: let $\mathscr M^{(0)},
\mathscr M
^{(1)}$ denote independent
copies of $\mathscr M$, let also $(U,V)$ be independent of $(\mathscr
M^{(0)}, \mathscr M
^{(1)})$ with density
$2 \mathbf{1}_{ \{ 0 \leq u \leq v \leq1 \} }$ on $[0,1]^2$. Then the
process defined by
%
\begin{equation}
\label{eqfixchord} 
\qquad \cases{ \displaystyle\bigl(1-(V-U)
\bigr)^\beta\mathscr M^{(0)} \biggl( \frac
{s}{1-(V-U)}\biggr),\qquad \mbox{if $s<U$},
\vspace*{5pt}\cr
\displaystyle\bigl(1-(V-U)\bigr)^\beta
\mathscr M^{(0)} \biggl( \frac{U}{1-(V-U)} \biggr)+(V-U)^\beta\mathscr M^{(1)} \biggl( \frac{s-U}{V-U} \biggr),
\vspace*{3pt}\cr
\hspace*{195.5pt} \mbox{if $U\le s<V$},
\vspace*{5pt}\cr
\displaystyle\bigl(1-(V-U)\bigr)^\beta\mathscr M^{(0)}
\biggl( \frac{s-(V-U)}{1-(V-U)} \biggr),\qquad\mbox{if $s\ge V$,}}
\end{equation}
is distributed like the initial process $\mathscr M$.

%
\begin{rem*}
Note in passing that the constant $\beta$ defined in (\ref{eqdef-beta}) appears in several contexts, such as the Hausdorff
dimension of the standard random Cantor set, in the
problem of parking arcs on the circle \cite{coffmall,bargne}, in the
analysis of the complexity of partial match retrieval algorithms
in search trees \cite{FlPu1986,FlGoPuRo1993,cujo11,BrNeSu12} or in
models from biological physics \cite{phy}.
\end{rem*}
We prove that the convergence of $n^{-\beta/2}C_n$ to $\mathscr M$ is actually
uniform with probability one. For any c{\`a}dl{\`a}g or continuous
function $f$,
we denote its supremum by~$\| f \|$.

\begin{theorem}\label{thmmain}
As $n \to\infty$, for the topology of uniform convergence on $[0,1]$,
%
\begin{equation}
\label{eqLm-limit} n^{-\beta/2}C_n \to\mathscr M\qquad\mbox{almost
surely and in }L^m\qquad \mbox{for all }m\in\mathbb{N}.
\end{equation}
Up to a multiplicative constant, the process $\mathscr M$ is the unique
solution of
(\ref{eqfixchord}) (in distribution) with c{\`a}dl{\`a}g paths
subject to
$\mathbf{E} [\|\mathscr M\|^2] < \infty$.
\end{theorem}

\begin{rem*}
The theorem states $\| n^{-\beta/2}C_n -
\mathscr M
\| \to0$
almost surely and in $L^m$ as $n \to\infty$. However, for technical
reasons of
measurability, the state space of c{\`a}dl{\`a}g functions $\mathcal
{D}[0,1]$ is
endowed with the
Skorokhod topology. We refer to the standard textbook by
Billingsley \cite{bil68} for refined information on this matter.
\end{rem*}

%
\begin{theorem}\label{thmdualtree}
Almost surely as $n\to\infty$, we have
\[
\bigl(\bigl(T_n, n^{-\beta/2} d_n\bigr), \mathfrak
L_n \bigr) \to\bigl((\mathcal T_{\mathscr
M}, d_{\mathscr M}),
\mathfrak L _{\mathscr M} \bigr).
\]
Here, the convergence of the components is with respect to the Gromov--Hausdorff
distance between compact metric spaces and the Hausdorff metric on
compact subsets of
the disk $\mathscr D$.
\end{theorem}

The assertion about the convergence of the lamination is the main
result in
\cite{legalcu} and we partially rely on their results to give a
simplified proof using
our approach.
The convergence of the tree does not use any statement in
\cite{legalcu}, and proves the conjecture in Section~4.4 of \cite{legalcu}.
Note that the number of chords $N_n$ inserted by time $n$ is of order
$\sqrt n$.
More precisely, Curien and Le~Gall \cite{legalcu} show that $N_n /
\sqrt {n} \rightarrow\sqrt{\pi}$ almost
surely. So the \emph{volume} of the tree $T_n$ is $N_n\sim(\pi
n)^{1/2}$ and the
order of magnitude of distances with respect to its volume is
$N_n^\beta$.

\begin{rem*}
In fact, it is not hard to see that $N_n$ is
distributed like the number of maxima in a triangle
when $n$ points are inserted uniformly at random and independent of
each other.
This quantity has been studied in detail by
Bai et al. \cite{baihwang}, who give exact formulas for the mean and
the second moment together with first order expansions of all higher
moments which imply asymptotic normality of $N_n$ after proper
rescaling. We refer to Theorem~3 in \cite{baihwang} for details.
\end{rem*}


The limit metric space $\mathcal T_\mathscr M$ is yet another natural
random fractal
real tree
which
does not come from a Brownian excursion \cite
{Aldous1991a,Aldous1991b,Aldous1993a} or
another more general L{\'e}vy process \cite{LeLe1998,DuLe2002}.
Other examples include the fragmentation trees of Haas and Miermont
\cite{HaMi2004a} (see also \cite{HaMi12}) and the minimum
spanning tree of a complete graph whose scaling
limit has been constructed by Addario-Berry et al. \cite{AdBrGoMi2012a}.

A priori, the process $\mathscr M$ is not fully identified by the
fixed-point equation
(\ref{eqfixchord}) because of the free multiplicative constant.
(Curien and Le~Gall \cite{legalcu} proved that the scaling constant
$\kappa$ in (\ref{eqfree-const}) exists. They did not need to identify
it for the
main topic there is the limit lamination, which is not affected by this leading
constant.) In order to identify $\mathscr M$ precisely, we study the
asymptotics of
$\mathbf{E} [C_n(\xi)]$ for an independent uniform random variable
$\xi$.
Let $\Gamma(s):=\int_0^\infty x^{s-1} e^{-x} \,dx$ denote the Gamma function.

%
\begin{theorem}\label{thmunif}
Let $\gamma= \beta/2 +1$ and $\bar\gamma= \frac{-\sqrt{17}
+1}{4}$. Then
%
\begin{eqnarray}
\label{eqmun} \qquad\mathbf{E} \bigl[C_n(\xi) \bigr] &=&\frac{\sqrt{\pi}}{4}
\sum_{k=1}^n \pmatrix{n \cr k}
(-1)^{k+1} \frac{\Gamma(k-\gamma+1) \Gamma(k-\bar\gamma+1)}{k! \Gamma
(k+3/2)\Gamma(2-\gamma)\Gamma(2- \bar\gamma)}.
\end{eqnarray}
Furthermore, as
$n\to\infty$,
%
\begin{eqnarray}\label{constunif}
\mathbf{E} \bigl[C_n(\xi)\bigr] &=& c n^{\beta/2}
+O(1)
\nonumber\\[-8pt]\\[-8pt]
\eqntext{\displaystyle\mbox{with } c = \frac {\sqrt{\pi}\Gamma(2 \gamma-1/2 )}{2 \Gamma(\gamma) \Gamma
^2(\gamma
+1/2)} = 1.178226\ldots.}
\end{eqnarray}
\end{theorem}

The asymptotic expansion in (\ref{constunif}) may be obtained from the
work of
Bertoin and Gnedin \cite{BeGn2004} on nonconservative fragmentations.
More precisely, the first-order
asymptotics is explicitly stated there, and the error term (that we
need to prove the
almost sure convergence in Theorems~\ref{thmmain} and~\ref{thmdualtree}) follows
from the same representation with a little more work. We include the explicit
formula (\ref{eqmun}) since it seems that similar developments have
attracted some
interest in the community of analysis of algorithms \cite{ChHw2003}.
Theorem~\ref{thmunif} is not the heart of the matter here, but for the
sake of
completeness, we provide a proof in \hyperref[secunif]{Appendix}.
Let $\mathrm{B}(x,y) = \int_0^1 t^{x-1} (1-t)^{y-1} \,dt$ denote the
beta function.

%
\begin{cor}The process $\mathscr M$ in (\ref{eqLm-limit}) is such that
\[
\mathbf{E} \bigl[\mathscr M(s)\bigr] = \kappa\bigl(s(1-s)\bigr)^\beta,
\qquad\kappa = \frac{c}{\mathrm{B}(\beta+ 1, \beta+1)} = 3.34443\ldots,
\]
where $c$ is given in (\ref{constunif}), which identifies uniquely
the solution
of (\ref{eqfixchord}) among all processes with c{\`a}dl{\`a}g paths subject
to $\mathbf{E} [\|\mathscr M\|^2] < \infty$.
\end{cor}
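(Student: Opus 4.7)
The plan is to identify $\kappa$ by averaging the pointwise mean \eqref{eq:free-const} against the uniform law on $[0,1]$, and matching the result with the asymptotics of $\Ec{C_n(\xi)}$ provided by Theorem~\ref{thm:unif}. Concretely, if $\xi$ is uniform on $[0,1]$ and independent of everything else, Fubini applied to the expression in \eqref{eq:free-const} gives
\begin{equation*}
\Ec{\sM(\xi)} \;=\; \int_0^1 \Ec{\sM(s)}\,ds \;=\; \kappa \int_0^1 (s(1-s))^\beta\,ds \;=\; \kappa\, \uB(\beta+1,\beta+1).
\end{equation*}
Hence the whole task is to compute $\Ec{\sM(\xi)}$ by another route and solve for $\kappa$.

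The second step is to show $\Ec{\sM(\xi)} = c$, with $c$ as in \eqref{const:unif}. Theorem~\ref{thm:main} gives $\|n^{-\beta/2} C_n - \sM\| \to 0$ in $L^1$ (in fact $L^m$ for any $m$), and conditioning on the lamination while integrating out $\xi$ yields
\begin{equation*}
\Ec{\bigl|n^{-\beta/2} C_n(\xi) - \sM(\xi)\bigr|} \;\le\; \Ec{\|n^{-\beta/2} C_n - \sM\|} \;\longrightarrow\; 0.
\end{equation*}
Consequently $n^{-\beta/2}\Ec{C_n(\xi)} \to \Ec{\sM(\xi)}$, and Theorem~\ref{thm:unif} identifies the left-hand limit as $c$. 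Combining with the display above gives $\kappa = c / \uB(\beta+1,\beta+1)$; the numerical value is a routine evaluation using \eqref{const:unif} and the closed form $\uB(\beta+1,\beta+1) = \Gamma(\beta+1)^2/\Gamma(2\beta+2)$.

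For uniqueness, Theorem~\ref{thm:main} already asserts that the solutions to \eqref{eq:fixchord} with c\`adl\`ag paths and $\Ec{\|\sM\|^2}<\infty$ form a one-parameter family obtained by multiplying a distinguished solution by a positive constant. Since $\Ec{\sM(1/2)} = \kappa\, 4^{-\beta}$ is a strictly increasing linear function of the scaling constant, fixing the value of $\kappa$ pins down a unique member of that family, which completes the proof.

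There is essentially no analytic obstacle: the only point requiring care is the interchange of limit and expectation, but this is immediate from the $L^1$ convergence in Theorem~\ref{thm:main} and the trivial bound $|C_n(\xi)| \le \|C_n\|$, so the corollary is a direct bookkeeping consequence of Theorems~\ref{thm:main} and~\ref{thm:unif}.
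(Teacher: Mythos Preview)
Your proof is correct and follows exactly the argument the paper intends: the corollary is stated without an explicit proof in the paper, but the surrounding text (``In order to identify $\sM$ precisely, we study the asymptotics of $\Ec{C_n(\xi)}$\ldots'') makes clear that the intended derivation is precisely the one you give---integrate the mean function \eqref{eq:free-const} against the uniform law, match with the limit of $n^{-\beta/2}\Ec{C_n(\xi)}$ from Theorem~\ref{thm:unif} via the $L^1$ convergence in Theorem~\ref{thm:main}, and invoke the uniqueness-up-to-constant from Theorem~\ref{thm:main} (equivalently Proposition~\ref{prop:unique}).
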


\subsubsection*{The homogeneous lamination}
The lamination process we have introduced is actually an instance of a
more general
fragmentation process which is also discussed in \cite{legalcu}, Section~2.4,
using a two-stage split
procedure: first pick a fragment with probability proportional to its
mass to the power
$\alpha$ (here $\alpha=2$), then choose the random chord within this
fragment by
sampling two independent uniform points on the intersection of the corresponding
fragment with the circle. In the
language of fragmentation theory \cite{Bertoin2006}, $\alpha$ is the
\emph{index of self-similarity}, and the actual split given the
fragment is described
by a \emph{dislocation measure}, which is here (essentially) given by
the two uniform
points conditioned to fall in the same fragment. One may define related
fragmentations
where the next fragment to split is chosen with probability
proportional to its mass
to the power $\alpha\in\ensuremath{\mathbb{R}}$; the cases of
interest here are those with
$\alpha\ge0$.
When $\alpha\ge0$, Curien and Le~Gall \cite{legalcu}, Section~2, have
shown that the limit laminations are all
identical.
However, and although it encodes the same lamination for
every $\alpha\ge0$, the encoding process (related to the dual tree)
depends on
whether $\alpha>0$ or $\alpha=0$. The tree $(\mathcal T_\mathscr M,
d_{\mathscr M})$ is the
scaling limit
of the dual tree for every $\alpha>0$, but this raises the question of
the dual tree in
the case $\alpha=0$.

When $\alpha=0$, the choice of the next fragment is independent of its mass---hence homogeneous---and there is a drastic change in the behavior
of the height process. At each step, the
fragment containing the next chord is chosen uniformly at random. Note
here that
every trial yields a new insertion, and the lamination at time~$n$
contains $n$
chords. Write $C_n^h(s)$ for the height in the dual tree of the fragment
containing $s\in[0,1]$. Curien and Le~Gall \cite{legalcu},
Theorem~3.13, prove that for every $s\in(0,1)$ the
quantity $n^{-1/3}C_n^h (s)$ converges almost surely as $n\to\infty$, where
the pointwise limit $\mathscr H(s)$ may be described by a process
$\mathscr H$ with continuous
sample paths which satisfy another, similar but different, fixed-point equation
(see~Section~\ref{seclimith}).

In this case, the approach used to prove Theorem~\ref{thmdualtree}
yields the
following result: let $T_n^{h}$ denote the tree dual to the homogeneous
laminations
$\mathfrak L_n^{h}$, and let $d_n^{h}$ denote the graph distance in $T_n^{h}$.

\begin{theorem}\label{thmdualtree0}
Almost surely, as $n\to\infty$, we have
\[
\bigl(\bigl(T_n^{h}, n^{-1/3}
d_n^{h}\bigr); \mathfrak L_n^{h}
\bigr) \to \bigl((\mathcal T_{\mathscr H},d_{\mathscr H}); \mathfrak
L_{\mathscr H}\bigr).
\]
The convergence of the dual tree is with respect to Gromov--Hausdorff topology,
and the lamination converges for the Hausdorff topology on compact
subsets of the disk.
\end{theorem}

The second assertion of the Theorem~\ref{thmdualtree0} has been
proved in
\cite{legalcu}, and our contribution relies in the proof of convergence
of the dual tree. Our approach to Theorem~\ref{thmdualtree0} relies on
the same
functional ideas developed for the self-similar case and the proof of
Theorem~\ref{thmdualtree}. We explain them below in more detail.

%
\begin{rem*}
As already indicated, we have $\mathfrak L_{\mathscr M} = \mathfrak
L_{\mathscr H}$ almost surely.
In terms of the dual trees, this corresponds to the fact that
the equivalence relations given by $d_{\mathscr M}$ and $d_{\mathscr
H}$ almost
surely identify the
same points on the unit interval.
This highlights the difference between convergence in the
Hausdorff distance solely relying on $\mathcal T_{\mathscr M} =
\mathcal T_{\mathscr H}$ as
collection of equivalence classes of $[0,1]$, not involving the
geometry of the
limit objects and convergence
of the dual trees to $(\mathcal T_\mathscr M,d_\mathscr M)$ [and
$(\mathcal T_\mathscr H,d_\mathscr H)$, resp.]
seen as metric spaces.
\end{rem*}

\subsubsection*{About the main ideas}
The main techniques in \cite{legalcu} are inherently pointwise, and one
of the main
difference in spirit\vadjust{\goodbreak} in our approach is to consider the problem as
functional from the
very beginning. In particular, we develop a new construction for the
limit process $\mathscr M$.
We construct the random process $Z$ (which is almost surely equal to
$\mathscr M$)
as the uniform limit of continuous functions $Z_n\dvtx [0,1]\to[0,\infty)$
which are
designed so that $(Z_n(s), n\ge0)$
is a nonnegative martingale for every $s\in[0,1]$.
Unlike in \cite{legalcu} where results entirely rely on an approach
that is \emph{forward} in time, we make use of the inherent recursive
structure of
the problem and study the telescoping sum representation
\[
Z_n-Z_0=\sum_{i=0}^{n-1}
(Z_{i+1}-Z_i).
\]
More precisely, this \emph{backward} approach is based on an $L^2$
argument using the fact that one can bound
$\|Z_{i+1}-Z_i\|^2$ in terms of \emph{independent} copies of
$\|Z_i-Z_{i-1}\|$ corresponding to the two fragments created by the
insertion of the first chord as in (\ref{eqfixchord}).
The expansion of the square yields one contribution
involving the single fragment ($\mathbf{E} [\|Z_{i}-Z_{i}\|^2]$) and one
involving the
first two fragments, which may be bounded using only $\mathbf{E}
[|Z_{i}(\xi )-Z_{i-1}(\xi)|^2]$
for a uniform random variable $\xi$. So our representation allows to
leverage the convergence at a uniformly distributed random point
(Lemma~\ref{lemonedim}) to deduce
$\mathbf{E} [\|Z_{i+1}-Z_i\|^2]\le\chi\cdot\mathbf{E} [\|
Z_{i}-Z_{i-1}\|^2]$ for
some $\chi<1$ and all $i$ sufficiently large
leading to geometric convergence in a functional sense.
The convergence of the discrete sequence $n^{-\beta/2}C_n$ is obtained
in a similar vein. After using an appropriate embedding of both
the sequence and the limit $Z$, our backward approach technically relies
on ideas from the contraction method
\cite{Ro91,RaRu95,NeRu04}; see also \cite{NeSu12} for a recent
development in
function spaces. A~somewhat similar approach toward functional
convergence results relying on first establishing one-dimensional
convergence at a specific point in the context of
the Quicksort algorithm can be found in Ragab and R{\"o}sler \cite{ragabroe}.
%

\subsection{Related work on random laminations of the disk}

The work of Curien and Le~Gall \cite{legalcu} was motivated by the
pioneering work of
Aldous \cite{al94circle,al94} who studied \emph{uniform random
triangulations} of the disk
which arise as limiting objects for uniform triangulations of regular
$n$-gons as
$n \to\infty$. In the case of uniform random triangulations, the
process which
encodes the limit triangulation is the Brownian excursion, and the
scaling limit
of the sequence of dual trees is the
Brownian continuum random tree introduced in \cite
{Aldous1991a,Aldous1991b,Aldous1993a}.

Among the recent work on laminations of the disk, one can mention \cite
{CuKo2012a}
where Curien and Kortchemski showed that the Brownian triangulation is
also the
scaling limit of other random subsets of the disk, in particular,
noncrossing trees
(sets of noncrossing chords which form a tree) \cite{MaPa2002}, and dissections
(noncrossing sets of chords) under the uniform distribution. By sampling
tessellations according to a Boltzmann weight depending on the degree
of the faces,
Kortchemski \cite{Ko2011b} obtained limit laminations which are not
triangulations and are
encoded by excursions of stable spectrally positive L\'evy processes
(with L\'evy measure concentrated on $[0,\infty)$).
Finally, Curien and Werner \cite{CuWe2011a} have
studied geodesic laminations of the Poincar\'e disk.
They construct and study the unique random
tiling of the hyperbolic plane into triangles with vertices on the
boundary whose
distribution is invariant under M\"obius transformations and satisfies
a certain
spatial Markov property.

\subsubsection*{Plan of the paper}
In Section~\ref{seclimit}, we give our
construction of a
continuous solution $Z$ of (\ref{eqfixchord}) with $\mathbf{E}
[Z(s)] =
\kappa
(s(1-s))^\beta$.
(Recall that $Z=\mathscr M$ almost surely.)
The construction guarantees finiteness of all moments of the supremum
$\|Z\|$ which is
essential for our approach. Here, we also prove the characterization of
$Z$ as a
solution of (\ref{eqfixchord}) under additional conditions. In
Section~\ref{secconv},
we prove the uniform convergence of $n^{-\beta/2} C_n$ to $Z$.
We also obtain an upper bound on the rate of convergence in the $L^m$ distance,
$m\ge1$, which yields the almost sure convergence in Theorem~\ref{thmmain}.
Here, we also show how our results simplify the arguments to deduce
convergence of
the lamination. Section~\ref{seclimith} is devoted to the proof of
Theorem~\ref{thmmain2} which covers the homogeneous case $\alpha= 0$.
Finally, in Section~\ref{secpropdual} we prove some properties about
the dual
tree $\mathcal T_Z=\mathcal T_\mathscr M$, in particular about its
fractal dimension.
Our proof of Theorem~\ref{thmunif} is based on generating functions
and is given in
\hyperref[secunif]{Appendix} to keep the body of the paper more focused.


\section{A functional construction of the self-similar limit height
process} \label{seclimit}

Our aim in this section is to propose an alternative construction of
the limit process~$Z$. Although Curien and Le~Gall \cite{legalcu}
have proved the existence of a continuous process $\mathscr M$ (which
is almost
surely equal to $Z$) using bounds on the moments on the increments and
Kolmogorov's
criterion \cite{ReYo1991}, Theorem~2.1, we adopt here a
functional approach
that will later guide our proof of the convergence theorem
(Theorem~\ref{thmmain}).

The process $Z$ is constructed in terms of a set of independent random variables
on the unit interval as follows.
First, we identify the nodes of the infinite binary tree with the set
of finite
words on the alphabet $\{0,1\}$,
\[
\mathcal T= \bigcup_{n\ge0}\{0,1\}^n.
\]
The descendants of $u\in\mathcal T$ correspond to all the words in
$\mathcal T$ with
prefix $u$.
Let $\{(U_v,V_v), v\in\mathcal T\}$
be a set of independent and identically distributed two-dimensional
random vectors with density $2 \mathbf{1}_{ \{ 0 \leq u \leq v \leq1
\} }$ and $A^+ = \{ (u,v) \in[0,1]^2\dvtx  u < v\}$. For convenience,
we also set
$U:= U_\varnothing$ and $V:= V_\varnothing$ and define
\[
h(s) = \bigl(s(1-s)\bigr)^\beta.
\]

Let $\mathcal C_0([0,1])$ denote the set of continuous functions $f$ on
the unit
interval vanishing at the boundary, that is, $f(0)=f(1)=0$.
Define the operator $G\dvtx  A^+ \times\mathcal C_0([0,1])^2 \to\mathcal
C_0([0,1])$ by
\[
G[u,v;f_0,f_1](s)
= \cases{ \displaystyle\bigl(1-(v-u)
\bigr)^\beta f_0 \biggl( \frac{s}{1-(v-u)} \biggr),\qquad \mbox{if $s<U$},
\vspace*{5pt}\cr
\displaystyle\bigl(1-(v-u)\bigr)^\beta
f_0 \biggl( \frac{u}{1-(v-u)} \biggr) + (v-u)^\beta
f_1 \biggl( \frac{s-u}{v-u} \biggr),
\vspace*{3pt}\cr
\hspace*{170pt}\mbox{if $U\le s<V$,}
\vspace*{5pt}\cr
\displaystyle\bigl(1-(v-u)\bigr)^\beta f_0 \biggl(
\frac{s- (v-u)}{1-(v-u)} \biggr),\qquad\mbox{if $s\ge V$.}}
\]
For convenience, define
%
\begin{eqnarray}\label{defK0K1}
K_0(s;u,v) & = &\mathbf{1}_{ \{ s<u \} } \frac{s}{1-(v-u)}+
\mathbf {1}_{ \{ s \geq v \} } \frac{s-(v-u)}{1-(v-u)}\nonumber
\\
&&{}  + \mathbf{1}_{ \{ u \leq s < v \} }
\frac{u}{1-(v-u)},
\\
K_1(s;u,v) & = &\mathbf{1}_{ \{ u \leq s < v \} }
\frac{s-u}{v-u},\nonumber
\end{eqnarray}
so that we have the more compact form
\[
G[u,v;f_1,f_2](s)= \bigl(1-(v-u)\bigr)^\beta
f_0\bigl(K_0(s;u,v)\bigr)+ (v-u)^\beta
f_1\bigl(K_1(s;u,v)\bigr).
\]

For every node $u\in\mathcal T$, let $Z_0^{(u)}= \kappa h(s)$. Then define
recursively
%
\begin{equation}
\label{defrecZ} Z_{n+1}^{(u)}=G\bigl(U_u,
V_u; Z_n^{(u0)}, Z_n^{(u1)}
\bigr),
\end{equation}
%
and define $Z_n=Z_n^\varnothing$ to be the value observed at the root
of $\mathcal T$.
For every $s\in(0,1)$, one can verify that the sequence $(Z_n(s), n\ge
0)$ is a
nonnegative martingale for the filtration $\mathcal F_n=\sigma((U_u,V_u)\dvtx
|u|\le n)$,
so that $Z_n(s)$ converges almost surely. (This reduces to proving that
$\mathbf{E} [G(U,V;h,h)(s)]=h(s)$ for every $s\in[0,1]$, and is essentially
proved in
the first moment calculation in Section~4.2 of \cite{legalcu}.)
The game is now to prove that this
convergence is actually uniform for $s\in(0,1)$, which will yield the following
theorem. (See Figure~\ref{figlamination-process} for a simulation.)

%
\begin{figure}

\includegraphics{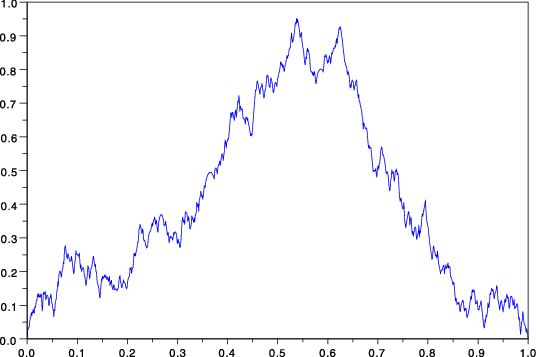}

\caption{An instance of limit height process $Z/\kappa$ simulated
using the approximations $Z_n/\kappa, n\ge1$.}\label{figlamination-process}
\end{figure}

%
\begin{theorem} \label{thmfixas}
For any $u \in\mathcal T$, almost surely, the sequence $Z_n^{(u)}$
converges uniformly to a
continuous process $Z^{(u)}$.
Almost surely, for every $s\in[0,1]$, we have
%
\begin{equation}\label{fixpas}
Z(s)=\cases{ \displaystyle\bigl(1-(V-U)\bigr)^\beta
Z^{(0)} \biggl( \frac{s}{1-(V-U)} \biggr),\qquad\mbox{if $s<U$,}
\vspace*{5pt}\cr
\displaystyle\bigl(1-(V-U)\bigr)^\beta Z^{(0)} \biggl(
\frac{U}{1-(V-U)} \biggr) + (V-U)^\beta Z^{(1)} \biggl(
\frac{s-U}{V-U} \biggr),
\vspace*{3pt}\cr
\hspace*{190pt}\mbox{if $U\le s<V$,}
\vspace*{5pt}\cr
\displaystyle\bigl(1-(V-U)
\bigr)^\beta Z^{(0)} \biggl( \frac{s- (V-U)}{1-(V-U)} \biggr),\qquad\mbox{if $s\ge V$.}}\hspace*{-30pt}
\end{equation}
Moreover, $\mathbf{E} [\|Z\|^m] < \infty$ for all $m \in\mathbb
{N}$, $\mathbf{E}  [Z(s)  ] =
\kappa h(s)$ and writing $\mathcal{L}(X)$ for the law of $X$, we have
$\mathcal{L}
(Z^{(u)}) = \mathcal{L}(Z)$ for all $u \in\mathcal T$. 
\end{theorem}

From the theorem above and its proof given subsequently, one deduces
that the random variable $Y=Z(\xi)$ where $\xi$ is an independent
uniformly distributed random variable satisfies the following
distribution fixed-point equation:
%
\begin{equation}
\label{eqfixunif} Y \stackrel{d} {=} \bigl(1-(V-U)\bigr)^\beta Y +
\mathbf{1}_{ \{ U \leq\xi< V
\} } (V-U)^\beta \widehat Y.
\end{equation}
Here, $\widehat Y$ is distributed as $Y$ and the random variables
$Y$, $\widehat Y$, $\xi$, and $(U,V)$ are independent.
In fact, this identity is at the very heart of the proof of Theorem
\ref{thmfixas}
as will become clear below. 

Note that any set of independent vectors $(U_v, V_v)_{v \in\mathcal
T}$ with distribution $2 \mathbf{1}_{ \{ 0 < u < v <1 \} }$ can be
used in the
construction given in this section. It is in the
next section where we make a specific choice of the set in order to
couple the limit to the discrete lamination process. In \cite
{sodawir,BrNeSu12}, a similar construction has been used in a related
context: there, the uniform convergence follows from a bootstrap of the
pointwise convergence which requires tedious verifications. Here, we
prove directly that the convergence is uniform using an $L^2$ argument.

Write $\Psi= U / (1-(V-U))$. By the definition of $Z_n$, we have the
following expansion:
%
\begin{eqnarray}\label{recstart}
\qquad && \bigl[Z_{n+1}(s) - Z_n(s)
\bigr]^2\nonumber
\\
&&\qquad =  \bigl(1-(V-U)\bigr)^{2\beta} \bigl[Z_n^{(0)}
\bigl(K_0(s;U,V)\bigr) - Z_{n-1}^{(0)}
\bigl(K_0(s;U,V)\bigr) \bigr]^2\nonumber
\\
&&\quad\qquad{} + \mathbf{1}_{ \{ U \leq s < V \} } (V-U)^{2\beta} \bigl[Z_n^{(1)}
\bigl(K_1(s;U,V)\bigr) - Z_{n-1}^{(1)}
\bigl(K_1(s;U,V)\bigr) \bigr]^2
\\
&&\quad\qquad{} + \mathbf{1}_{ \{ U \leq s < V \} } 2\bigl((V-U) \bigl(1-(V-U)\bigr)
\bigr)^\beta \bigl[Z_n^{(0)}(\Psi) -
Z_{n-1}^{(0)}(\Psi) \bigr] \nonumber
\\
&&\qquad\qquad{} \times\bigl[Z_n^{(1)}\bigl(K_1(s;U,V)\bigr) -
Z_{n-1}^{(1)}\bigl(K_1(s;U,V)\bigr) \bigr].\nonumber
\end{eqnarray}
Define
%
\begin{eqnarray}\label{constlip}
q &=& \mathbf{E} \bigl[\bigl(1-(V-U)\bigr)^{2\beta}\bigr] + \mathbf{E}
\bigl[(V-U)^{2\beta}\bigr]  \nonumber
\\
&=& \frac{2}{2\beta +1} = \frac{2}{\sqrt{17}-2} < 1,
\\
q' & = & 2 \sqrt{\mathbf{E} \bigl[
\bigl((V-U) \bigl(1-(V-U)\bigr)\bigr)^{2\beta}\bigr]}.
\nonumber
\end{eqnarray}
Then equation (\ref{recstart}) yields
%
\begin{eqnarray}\label{recmeansq}
&& \mathbf{E} \bigl[\|Z_{n+1} - Z_n
\|^2 \bigr]\nonumber
\\
&&\qquad \leq q \mathbf{E} \bigl[\|Z_{n} -
Z_{n-1}\|^2 \bigr]\nonumber
\\
&&\quad\qquad{}+ 2\mathbf{E} \bigl[\bigl((V-U) \bigl(1-(V-U)\bigr)\bigr)^\beta
\nonumber\\[-8pt]\\[-8pt]
&&\hspace*{60pt}{}\times
\bigl|Z_n^{(0)}(\Psi) - Z_{n-1}^{(0)}(
\Psi)\bigr| \cdot\bigl\| Z_n^{(1)}- Z_{n-1}^{(1)}\bigr\|\bigr]
\nonumber
\\
&&\qquad \leq q \mathbf{E} \bigl[\|Z_{n} - Z_{n-1}\|^2
\bigr] \nonumber
\\
&&\quad\qquad{} + q' \sqrt {\mathbf{E} \bigl[\|Z_{n}-
Z_{n-1}\| ^2 \bigr] \cdot\mathbf{E} \bigl[
\bigl(Z_n^{(0)}(\Psi) - Z_{n-1}^{(0)}(\Psi)
\bigr)^2\bigr]}\nonumber
\end{eqnarray}
by the Cauchy--Schwarz inequality. 
If we were to drop the second term in the last line above, we would
have geometric convergence of $\mathbf{E} [\|Z_{n+1}-Z_{n}\|^2]$ since $q<1$.
Now, the crucial observation is that, the second term may actually be
shown to decrease geometrically using only the convergence at a
uniformly random point.
More precisely, the random variable $\Psi$ is uniform on $[0,1]$ and
independent of $\{(U_v, V_v)\dvtx  v \in\mathcal T_0\}$, where $\mathcal
T_0=\{0u\dvtx  u\in\mathcal T\}$. Thus,
\[
Z_n^{(0)}(\Psi) - Z_{n-1}^{(0)}(\Psi)
\stackrel{d} {=} Z_n(\xi)- Z_{n-1}(\xi),
\]
where $\xi$ is uniformly distributed on the unit interval and
independent of $\{(U_v, V_v)\dvtx v \in\mathcal T\}$. The following lemma
allows to bound the second summand in (\ref{recmeansq}).

\begin{lem} \label{lemonedim}
There exists a constant $C_1> 0$ such that, for all natural number
$n\ge1$,
\[
\mathbf{E} \bigl[\bigl(Z_n(\xi)- Z_{n-1}(\xi)
\bigr)^2\bigr] \leq C_1^2 q^n.
\]
\end{lem}

For the sake of clarity, we take Lemma~\ref{lemonedim} for granted for
now and show that it indeed implies exponential bounds for $\mathbf{E}
[\| Z_{n+1}-Z_n\|^2]$.

\begin{lem}\label{lemsqsup}For any $0 < \eta< 1-q^{1/2}$, there
exists a constant $C_2$ such that for all $n\ge0$,
\[
\mathbf{E} \bigl[\|Z_{n+1}- Z_n\|^2\bigr]\le
C_2 \bigl(q^{1/2}+\eta\bigr)^n.
\]
\end{lem}

\begin{pf}
Write $\Delta_n = \mathbf{E} [\|Z_{n+1}- Z_n\|^2]$ for $n \geq0$.
Then the
inequalities (\ref{recmeansq}) and Lemma~\ref{lemonedim} yield, for
every natural number $n$,
%
\begin{equation}
\label{recdelta} \Delta_n \leq q \Delta_{n-1} +
C_1 q' q^{n/2}\Delta_{n-1}^{1/2}.
\end{equation}
%
First, (\ref{recdelta}) clearly implies that $\Delta_n$ is bounded:
we have
\[
\Delta_n \le\bigl(q+ C_1q'
q^{n/2}\bigr) \cdot\max\{\Delta_i\vee1\dvtx  0\le i<n\}.
\]
So, taking $n_0$ large enough that $q+C_1q' q^{n/2}<1$ for all $n\ge
n_0$, it follows that for all $n\ge n_0$, we have $\Delta_n \le\max\{
\Delta_i, i\le n_0\} \vee1$.

Now, fix $0 < \eta< 1-q^{1/2}$ and $M$ such that $\Delta_n \leq M^2$
for all $n \in\mathbb{N}_0$.
Let $n_1$ be large enough such that for any $n \geq n_1$, we have
\[
\frac{M C_1 q'}{q^{1/2}+ \eta} \biggl( \frac{q^{1/2}}{q^{1/2}+ \eta} \biggr)^n \leq1.
\]
We now proceed by induction on $n\ge n_1$. Assume that $\Delta_n \leq
C_2 (q^{1/2} + \eta)^n$ for \mbox{$n \leq n_1$} where the constant $C_2$ is
chosen large enough such that
$q C_2 \leq( C_2 - 1)(q^{1/2} + \eta)$. Then by (\ref{recdelta}), we have
\begin{eqnarray*}
\Delta_{n+1} & \leq& q C_2 \bigl(q^{1/2} + \eta
\bigr)^n + M C_1 q' q^{n/2}
\\
& \leq&\bigl(q^{1/2} + \eta\bigr)^{n+1} \biggl[
\frac{C_2 q}{q^{1/2} + \eta} + \frac{M C_1 q'}{q^{1/2}+ \eta} \biggl( \frac{q^{1/2}}{q^{1/2} + \eta}
\biggr)^n \biggr]
\\
& \leq& C_2 \bigl(q^{1/2} + \eta\bigr)^{n+1}
\end{eqnarray*}
by our choice for $C_2$ and $n_1$, which completes the proof.
\end{pf}

With Lemma~\ref{lemsqsup} in hand, we may now complete the proof of
Theorem~\ref{thmfixas}.
\begin{pf*}{Proof of Theorem~\ref{thmfixas}}
The fact that there exists a continuous process $Z$ such that $Z_n \to
Z$ uniformly almost surely follows from standard arguments: first,
Markov's inequality, monotone convergence and the Cauchy--Schwarz
inequality imply that $\sup_{m \geq n}\|Z_m - Z_n\| \to0$ in
probability. It easily follows that also
$\sup_{m,p \geq n}\|Z_m - Z_p\| \to0$ in probability. By monotonicity,
the latter convergence is actually almost sure. Thus, the sequence
$(Z_n, n\ge0)$ is almost surely uniformly Cauchy. Since $Z_n$ is
continuous for every $n\ge0$, the completeness of $\mathcal{C}[0,1]$
implies the
existence of a limit function $Z$ which is almost surely continuous.


The sequences $Z_n^{(0)}$ and $Z_n^{(1)}$, $n\ge1$, also converge
since they are both distributed like $Z_{n-1}$, $n\ge1$. Write
$Z^{(0)}$ and $Z^{(1)}$ their uniform almost sure limits. Letting $n
\rightarrow\infty$, in the definition of $Z_n$ in (\ref{defrecZ})
implies the equality in (\ref{fixpas}). \par
Next, we prove that 
$\sup_{n \geq1} \mathbf{E} [\| Z_n \|^m] < \infty$ for any $m \in
\mathbb{N}$ by
induction on $m$. It is true for $m=1,2$ and we assume it holds for any
$\ell< m$ with $m > 2$. Then, by construction
%
\begin{eqnarray}\label{ineqhighermoments}
\mathbf{E} \bigl[\|Z_n\|^m\bigr] &\leq& \underbrace{
\bigl(\mathbf{E} \bigl[\bigl(1-(V-U)\bigr)^{m\beta}\bigr] + \mathbf{E}
\bigl[(V-U)^{m\beta}\bigr] \bigr) }_{q_m} \cdot\mathbf{E} \bigl[\|
Z_{n-1}\|^{m}\bigr]
\nonumber\\[-8pt]\\[-8pt]
&&{} + \sum_{i=1}^{m-1} \pmatrix{m \cr i}
\mathbf{E} \bigl[\|Z_{n-1}\|^i\bigr] \mathbf {E} \bigl[
\|Z_{n-1}\| ^{m-i}\bigr]. \nonumber
\end{eqnarray}
The induction hypothesis implies that the summand in (\ref{ineqhighermoments}) is bounded uniformly in $n$. Since $q_m< 1$, an
easy induction on $n$ gives the desired result $\sup_{n \geq1}
\mathbf{E}  [\| Z_n\|^m  ] < \infty$. It follows that
$\mathbf{E} [\|Z\|^m] <\infty$ for any $m
\in\mathbb{N}$.

Finally,
the fact that $\mathbf{E}  [Z_n(s)  ]= \kappa h(s)$ for all
$n$, and thus $\mathbf{E} [Z(s)] =
\kappa h(s)$, is essentially equivalent to the martingale property of $Z_n(s)$
mentioned above and can be found in \cite{legalcu}, Section~4.2.
This completes the proof.
\end{pf*}

It now remains to prove Lemma~\ref{lemonedim} about the bound on
$\mathbf{E} [|Z_n(\xi)-Z_{n-1}(\xi)|^2]$.
\begin{pf*}{Proof of Lemma~\ref{lemonedim}}
%
Let $W_0 = K_0(\xi, U,V)$ and $W_1 = K_1(\xi, U,V)$.
The key ingredient of the proof is the following observation:
\begin{enumerate}[(O1)]
\item[(O1)] On the event $\{ \xi\notin(U,V] \}$, the quantities
$W_0$ and $V-U$ are independent and $W_0$ has uniform distribution.
Moreover, given the event $\{U \leq\xi< V\}$, the quantities
$W_0$, $W_1$ and $V-U$ are independent and both $W_0$ and $W_1$ have
uniform distribution.
\end{enumerate}
%
Using this observation in (\ref{recstart}) directly implies the
desired result
\[
\mathbf{E} \bigl[\bigl|Z_{n+1}(\xi) - Z_n(\xi)\bigr|^2
\bigr] \leq q \mathbf{E} \bigl[\bigl|Z_n(\xi) - Z_{n-1}(
\xi)\bigr|^2\bigr],
\]
since the mixed terms cancel out for the expected value $\mathbf{E}
[Z_n^{(u)}(\xi)]=\kappa\mathrm{B}(\beta+ 1, \beta+ 1)$ is independent
of $u\in\mathcal T$ and $n\ge0$.
\end{pf*}
To complete this section, we show that the process $Z$ we have
constructed is characterized by the fixed-point equation (\ref{fixpas})
in a reasonable class of processes.

\begin{prop} \label{propunique}
The process $Z$ is the unique solution of the fixed-point equation
(\ref{eqfixchord}) (in distribution) among all c{\`a}dl{\`a}g
processes subject to $\mathbf{E} [Z(\xi)] = \kappa\mathrm{B} (\beta
+1, \beta
+1)$ and $\mathbf{E} [\|Z\|^2]< \infty$.
\end{prop}

\begin{pf}
The main part of the proof relies on the functional contraction method
developed in \cite{NeSu12}.
Let $\mathcal{M}(\mathcal{D}[0,1])$ denote the set of probability
measures on $\mathcal{D}[0,1]$.
Consider the map $T\dvtx  \mathcal{M}(\mathcal{D}[0,1]) \to\mathcal
{M}(\mathcal{D}[0,1])$ which to
$\mu\in\mathcal M(\mathcal{D}[0,1])$ assigns the law of the process
\[
\bigl(1-(V-U)^\beta\bigr) X \bigl(K_0(s; U,V)\bigr) +
(V-U)^\beta\widehat X\bigl(K_1(s;U,V)\bigr),
\]
where $X, \widehat X$ are independent functions sampled according to
$\mu$, both independent of $(U,V)$. Let $\mathcal{M}_2(h) \subseteq
\mathcal{M}(\mathcal{D}[0,1])$
be the subset of measures $\mu$ such that if $X$ is $\mu$-distributed
then $\mathbf{E} [\|X\|^2] < \infty$ and $\mathbf{E} [X(t)] = h(t)$
for all $t\in[0,1]$.
Lemma~18 in \cite{NeSu12} asserts that $T$ is contractive with respect
to the Zolotarev metric $\zeta_2$ in the space $\mathcal{M}_2(h)$ where
the Lipschitz constant can be chosen as $q<1$ given in (\ref{constlip}). This lemma relies on a discrete sequence [denoted $(X_n)$
there] satisfying conditions (C1), (C2), (C3)
formulated on page~20 in \cite{NeSu12}. In our setting, as we deal
directly with the limiting fixed-point equation, conditions (C1)~and~(C3) reduce to the fact that $T(\mathcal{M}_2(h))
\subseteq
\mathcal{M}_2(h)$. This can be shown by a direct computation (see,
e.g., \cite{legalcu}, Section~4.2).
Condition (C2) is satisfied for we have $q = \mathbf{E}
[(1-(V-U))^{2\beta}] + \mathbf{E} [(V-U)^{2\beta}] < 1$.
Furthermore, Curien and Le~Gall~\cite{legalcu}, Section~4.2, prove that
the mean function of any solution $X$ of (\ref{eqfixchord}) with
$\int_0^1 \mathbf{E} [|X(s)|] < \infty$ is a multiple of $h$. This
completes the proof.
\end{pf}
\section{Convergence of the discrete process} \label{secconv}

\subsection{Notation and setting} \label{subsecnotset}

Let $(U_i', V_i')_{i \geq1} $ be a sequence of independent vectors,
where for each $i \geq1$, $U_i', V_i'$ are independent and have
uniform distribution on the unit interval. We consider the lamination
process built from this set of vectors as explained in the
\hyperref[secintro]{Introduction}. Let us first explain the connection with the tree-based
construction of Section~\ref{seclimit}. 
It should be intuitively clear how the family $(U_v, V_v)_{v \in
\mathcal T}$ used to build the limit is constructed from
$(U_i',V_i')_{i \geq1}$; the precise statement requires additional notation.

Initially, there is a single fragment $S^{\varnothing}$ consisting of
the entire
disk $\mathscr D$, which is associated to the root $\varnothing\in
\mathcal T$.
For $n=1$, $U_1'$ and $V_1'$ of course both fall inside the unique
fragment and we
insert the chord connecting $U_1'$ and $V_1'$. This chord divides
$S^{(\varnothing)}$
into two fragments $S^{(0)}$, $S^{(1)}$, where $S^{(0)}$ denotes the fragment
containing $0$.
Define
\[
U:= U_\varnothing= \min\bigl(U_1',V_1'
\bigr)\quad\mbox{and}\quad V:=V_\varnothing =\max\bigl(U_1',V_1'
\bigr).
\]
In general, at some stage $n-1$, $n > 1$, of the process, we have
inserted some chords,
and associated fragments to the nodes in a finite subtree $T_{n-1}$ of
$\mathcal T$
(a~connected set containing the root).
Then, at step time $n>1$, if there is no node $u\in T_n$ such that one
of $U_n', V_n'$
falls inside $S^{(u)}$ and the other outside (i.e., the chord
connecting $U_n'$ and
$V_n'$ does not intersect any other previously inserted chord), we
insert the chord
connecting them. Let $S^{(v)}$ be\vspace*{1.5pt} the smallest fragment containing both
$U_n'$ and
$V_n'$; the\vspace*{1.5pt} chord joining $U_n'$ to $V_n'$ splits $S^{(v)}$ into two fragments
$S^{(v0)}$ and $S^{(v1)}$; the labeling is chosen such that $v0$ is
closer to
the root in the dual tree.
Moreover, writing $\operatorname{Leb}$ for Lebesgue measure on the
circle $\mathscr C
$, we let
%
\begin{equation}
\ell^{(v)} = \operatorname{Leb}\bigl(S^{(v)}\cap\mathscr C
\bigr) \label{deflv}
\end{equation}
be the mass of the fragment $S^{(v)}$, and
\begin{eqnarray*}
U_v&=& \frac{ \min(U_n', V_n')- \operatorname{Leb} ( \{ s \notin
S^{(v)} \cap
\mathscr C\dvtx  0<s \leq\min(U_n', V_n') \})}{\ell^{(v)}},
\\
V_v&=& \frac{\max(U_n', V_n')- \operatorname{Leb}( \{ s \notin
S^{(v)} \cap\mathscr C\dvtx  0<s \leq\max(U_n', V_n') \})}{\ell^{(v)}}.
\end{eqnarray*}
Then $(U_v, V_v)_{v \in\mathcal T}$ is a set of independent random
vectors each having density $2 \mathbf{1}_{ \{ 0 < u < v < 1 \} }$. In
the following,
for any $u \in\mathcal T$, $Z^{(u)}$ will denote the process
constructed in Section~\ref{seclimit} using this set of vectors.


For any $n \in\mathbb{N}$, let $\tau_0(n)$ be the first time $k$
when there
exist exactly $n$ integers $2 \leq\ell_1 < \cdots< \ell_n = k$ such\vspace*{-1pt} that
$U_{\ell_i}', V_{\ell_i}'$, $i = 1, \ldots, n$, both take values in $S^{(0)}$
(actually $S^{(0)}\cap\mathscr C$).
Analogously, let $\tau_1(n)$ be defined in the same way using the
segment $S^{(1)}$. Observe that $\tau_0(n)$ and $\tau_1(n)$ are only
the stopping times when $n$ \emph{trials} have been made in $S^{(0)}$
and $S^{(1)}$, respectively, and that these trials may not have all led
to the successful insertion of a chord.
For $n \in\mathbb{N}$, let $C_n^{(0)}(s)$ be the number of chords intersecting
the straight line going from $0$ to $\varphi_0(s)$ defined by
%
\[
\varphi_0(s)= s\bigl(1-(V-U)\bigr) + (V-U) \mathbf{1}_{ \{ s > \Psi \} }
\]
at time $\tau_0(n)$. Here and for the remainder of this section,\vspace*{1pt} we use
$\Psi:=U/(1+U-V)$ as in Section~\ref{seclimit}.
Note that $\varphi_0(s)$ is the natural parameterization\vspace*{1pt} 
for $C_n^{(0)}$ in the sense that $(C_n^{(0)})_{n \geq0}$ has the same
distribution as $(C_n)_{n \geq0}$. Observe that $C_n^{(0)}$ is~well
defined since $\tau_0(n) < \infty$ almost surely for all $n \in
\mathbb{N}$.
Analogously, let $C_n^{(1)}(s)$ be the number of chords intersecting
the straight line going from $U$ to $\varphi_1(s)$ where
\[
\varphi_1(s) = s(V-U) + U
\]
at the stopping time $\tau_1(n)$. For convenience, let $C_0^{(0)} =
C_0^{(1)} \equiv0$. At time $n$, let $I^{(0)}_n$ and $I^{(1)}_n$ be
the number of
pairs among $(U_i', V_i')$, $i = 2,\ldots, n$, whose components both
fall in $S^{(0)}$ and $S^{(1)}$, respectively.
Finally, let $F_n = n-1-I^{(0)}_n - I^{(1)}_n$ be the number of
failures, or unsuccessful insertion attempts by time $n$ due to one
point falling in $S^{(0)}$ and the other in $S^{(1)}$.
Then, given the first chord $(U,V)$, 
%
\begin{eqnarray*}
&& \mathcal{L}\bigl(I^{(0)}_n, I^{(1)}_n,
F_n\bigr)
\\
&&\qquad = \operatorname{Multi}\bigl(n-1; \bigl(1-(V-U)
\bigr)^2, (V-U)^2, 2 (V-U) \bigl(1-(V-U)\bigr)\bigr).
\end{eqnarray*}
Almost surely, for every $s\in[0,1]$,
we have
%
\begin{equation}
\label{eqreccn} C_n(s)= \cases{ \displaystyle C^{(0)}_{I^{(0)}_n}
\biggl( \frac{s}{1-(V-U)} \biggr),\qquad\mbox{if $s<U$,}
\vspace*{5pt}\cr
\displaystyle
C^{(0)}_{I^{(0)}_n} \biggl(\frac{U} {1-(V-U)} \biggr) + 1+
C^{(1)}_{I^{(1)}_n} \biggl( \frac{s-U}{V-U} \biggr),
\vspace*{3pt}\cr
\hspace*{121.5pt} \mbox{if $U \le s<V$,}
\vspace*{5pt}\cr
\displaystyle C^{(0)}_{I^{(0)}_n} \biggl(
\frac{s-(V-U)}{1-(V-U)} \biggr),\qquad \mbox{if $s\ge V$.}}
\end{equation}
Let $\xi$ be a uniform random variable, independent of $(U_i', V_i')_{i
\geq1}$. Then, we let $X_n$ be the following rescaled version of
$C_n$, for any $n\ge1$:
\[
X_n(s):= C_n(s) \frac{\kappa\mathrm{B}(\beta+ 1, \beta
+1)}{\mathbf{E} [C_n(\xi)]}.
\]

\subsection{Uniform convergence in $L^2$}

The main result of this section is the following theorem.

\begin{theorem} \label{thmconvunif}
As $n \to\infty$, we have $\mathbf{E} [\|X_n - Z\|^2] \rightarrow0$.
\end{theorem}

The convergence in $L^2$ will be used in Section~\ref{secLm} as the
base case of an inductive argument showing that one actually has
uniform convergence in every $L^m$, $m\ge2$. 

The proof runs along similar lines as the construction of the limit
process. It resembles ideas from the area of the contraction method
such as in \cite{NeRu04,NeSu12}. However, note that we are working
with a coupling of the process to its limit; we do not introduce any
metrics on a space of probability measures. The proof relies on the
same trick which allowed us to construct the limit process $Z$ in
Section~\ref{seclimit}, namely a bootstrapping of the convergence at a
uniform point which is made possible by the immediate decoupling of the
processes in two fragments when a chord is added.

In the following, given a real valued random variable $Y$, we write $\|
Y\|_2$ for the $L^2$-norm of $Y$ defined by $\mathbf{E}
[|Y|^2]^{1/2}$. The
convergence at a uniformly random location reads:

\begin{lem}\label{lemconvunif}Let $\xi$ be a $[0,1]$-uniform random
variable independent of $(U_i', V_i')_{i \geq1}$. Then
%
\begin{equation}
\label{l2one} \lim_{n \to\infty} \bigl\llVert X_{n}(\xi)-
Z( \xi) \bigr\rrVert _2 = 0.
\end{equation}
\end{lem}

We postpone the proof and show immediately how one leverages this
information to prove that $X_n\to Z$ uniformly in $L^2$ as $n\to\infty$.

\begin{pf*}{Proof of Theorem~\ref{thmconvunif}}
Let $\mu(n) = \mathbf{E}  [C_n(\xi)  ]$, where $\xi$ is
an independent uniform random variable. We
first rewrite the identity (\ref{eqreccn}) in terms of the rescaled
quantities $(X_n)_{n\ge0}$: with $X_0 \equiv0$, almost surely,
%
\begin{eqnarray}\label{eqxns}
\qquad X_n(s) 
&= & \frac{\mu(I^{(0)}_n)}{\mu(n)}
\biggl[\mathbf{1}_{ \{ s \leq U
\} } X^{(0)}_{I_n^{(0)}} \biggl(
\frac{s}{1-(V-U)} \biggr) \nonumber
\\
&& \hspace*{38pt}{} + \mathbf{1}_{
\{ s > V \} } X^{(0)}_{I^{(0)}_n}
\biggl( \frac{s-(V-U)}{1-(V-U)} \biggr) \biggr]
\\
&&{}+\mathbf{1}_{ \{ U < s \leq V \} } \biggl[\frac{1} {\mu(n)}+ \frac
{\mu (I^{(0)}_n)}{\mu(n)}
X^{(0)}_{I^{(0)}_n} (\Psi ) + \frac
{\mu(I^{(1)}_n)}{\mu(n)}
X^{(1)}_{I^{(1)}_n} \biggl( \frac{s-U}{V-U} \biggr) \biggr].\nonumber
\end{eqnarray}
%
Here, $(X^{(0)}_n)_{n \geq0}$ and $(X^{(1)}_n)_{n \geq0}$ are defined
analogously to $(X_n)_{n \geq0}$ based on $(C_n^{(0)})_{n \geq0}$ and
$(C_n^{(1)})_{n \geq0}$, respectively.
The convergence of $X_n$ to $Z$ is naturally decomposed into two steps:
first the convergence of the coefficients of the recurrence relation in
(\ref{eqxns}), and second the contractive property of the limit recurrence.
In order to reflect this decomposition, we
define the accompanying sequence $(Q_n)_{n \geq0}$. Let $Q_0 \equiv0$
and for $n \geq1$,
%
\begin{eqnarray}
\label{eqqns}
\qquad Q_n(s)&:= & \frac{\mu(I^{(0)}_n)}{\mu(n)} \biggl[
\mathbf{1}_{ \{ s \leq U
\} } Z^{(0)} \biggl( \frac{s}{1-(V-U)} \biggr) \nonumber
\\
&&\hspace*{38pt}
{} + \mathbf{1}_{ \{ s > V \} } Z^{(0)} \biggl( \frac
{s-(V-U)}{1-(V-U)} \biggr)
\biggr]
\\
&&{}+ \mathbf{1}_{ \{ U < s \leq V \} } \biggl[\frac{1} {\mu(n)} + \frac
{\mu
(I^{(0)}_n)}{\mu(n)}
Z^{(0)} ( \Psi ) + \frac{\mu
(I^{(1)}_n)}{\mu(n)} Z^{(1)} \biggl(
\frac{s-U}{V-U} \biggr) \biggr].\nonumber
\end{eqnarray}
We first show that $\mathbf{E} [\|Q_n - Z\|^2] \rightarrow0$. A direct
application of the definition of~$Q_n$, its coupling with the process
$Z$,  and the characterization of $Z$ in Theorem~\ref{thmfixas} implies the
following bound for the supremum of $Q_n-Z$:
%
\begin{eqnarray}\label{eqqnzsup}
\|Q_n - Z\|  &\le&  \biggl( 3 \biggl\llvert
\frac{\mu(I^{(0)}_n)}{\mu(n)} - \bigl(1-(V-U)\bigr)^\beta\biggr\rrvert
\nonumber\\[-8pt]\\[-8pt]
&&\hspace*{26pt}{} + \biggl\llvert \frac{\mu(I^{(1)}_n)}{\mu(n)} - (V-U)^\beta\biggr\rrvert \biggr) \|Z\|
+\frac{1}{\mu(n)}.\nonumber
\end{eqnarray}
Here, the triangle inequality in $L^2$ is sufficient for our needs and
we obtain:
%
\begin{eqnarray}\label{eqqnz}
\bigl\| \|Q_n - Z\| \bigr\|_2 &\leq& \biggl( 3 \biggl\llVert \frac{\mu(I^{(0)}_n)}{\mu
(n)} - \bigl(1-(V-U)\bigr)^\beta\biggr\rrVert
_2
\nonumber\\[-8pt]\\[-8pt]
&&\hspace*{26pt}{}  + \biggl\llVert \frac{\mu
(I^{(1)}_n)}{\mu(n)} - (V-U)^\beta\biggr
\rrVert _2 \biggr) \bigl\|\| Z\| \bigr\|_2 + \frac{1}{\mu(n)}.\nonumber
\end{eqnarray}
By the asymptotic expansion of $\mu(n)$ in Theorem~\ref{thmunif} (actually,
$\mu(n) \sim c n^{\beta/2}$ as $n\to\infty$ is sufficient), it is easy
to see that
the term inside the bracket vanishes as $n \to\infty$. Since $\|Z\|$
is bounded
in $L^2$, this implies $\mathbf{E} [\|Q_n - Z\|^2] \rightarrow0$ as desired.

We now move on to showing that $\mathbf{E} [\|X_n-Z\|^2]\to0$ as
$n\to\infty$.

We will use the following properties that either hold true by
construction or
are easily checked by direct computations:
\begin{enumerate}[(O2)]
\item[(O2)] For any $n \in\mathbb{N}_0$, we have $\mathcal
{L}(X_n^{(0)},
Z^{(0)}) = \mathcal{L}(X_n^{(1)}, Z^{(1)}) = \mathcal{L}(X_n, Z)$
and $((X_n^{(0)})_{n\geq1}, Z^{(0)})$ and $((X_n^{(1)})_{n\geq1},
Z^{(1)})$ are independent.
\item[(O3)] For any $n \in\mathbb{N}$, the random variables $I^{(0)}_n,
\Psi, ((X_m^{(0)})_{m \geq1}, Z^{(0)})$ are independent. The same
holds for
$I^{(1)}_n, ((X_m^{(1)})_{m \geq1}, Z^{(1)})$.
\end{enumerate}
The Minkowski inequality in $L^2$ is not good enough anymore, and one
needs to develop the square and handle the terms separately. We have
%
\begin{eqnarray}\label{ungl10}
\quad && \mathbf{E} \bigl[\|X_n - Q_n\|^2\bigr]\nonumber
\\
&&\qquad \le
\mathbf{E} \biggl[ \biggl( \frac{\mu(I^{(0)}_n)}{\mu(n)} \biggr)^2 \cdot \bigl
\llVert X^{(0)}_{I_{n}^{(0)}} - Z^{(0)} \bigr\rrVert
^2 \biggr]
\nonumber\\[-8pt]\\[-8pt]
&&\quad\qquad{} + \mathbf{E} \biggl[ \biggl( \frac{\mu(I^{(1)}_n)}{\mu(n)}
\biggr)^2 \cdot\bigl\llVert X^{(1)}_{I_{n}^{(1)}} -
Z^{(1)} \bigr\rrVert ^2 \biggr]\nonumber
\\
&&\quad\qquad{} + 2 \mathbf{E} \biggl[\frac{\mu(I^{(0)}_n)}{\mu(n)} \frac{\mu
(I^{(1)}_n)}{\mu(n)} \bigl\llvert
X^{(0)}_{I^{(0)}_n} (\Psi )- Z^{(0)} ( \Psi ) \bigr\rrvert
\cdot\bigl\llVert X^{(1)}_{I_{n}^{(1)}} - Z^{(1)} \bigr\rrVert
\biggr].\nonumber
\end{eqnarray}
Using (O3), and the equalities in distribution for
$(X^{(i)}_m)_{m\ge1}$, $i=0,1$ with $(X_m)_{m\ge1}$, and
$(Z^{(i)}_m)_{m\ge1}$, $i=0,1$ with $(Z_m)_{m\ge1}$, this yields
%
\begin{eqnarray} \label{eqxnqn}
&& \mathbf{E} \bigl[\|X_n - Q_n\|^2\bigr]\nonumber
\\
&&\qquad \le
\biggl\{\mathbf{E} \biggl[ \biggl( \frac{\mu (I^{(0)}_n)}{\mu(n)} \biggr)^2
\biggr] + \mathbf{E} \biggl[ \biggl( \frac{\mu (I^{(1)}_n)}{\mu(n)} \biggr)^2
\biggr] \biggr\} \sup_{i<
n} \mathbf{E} \bigl[
\|X_{i} - Z\|^2\bigr]
\\
&&\quad\qquad{} + 2 \bigl\llVert X_{I^{(0)}_n} ( \Psi )- Z ( \Psi ) \bigr\rrVert
_2 \cdot\sup_{i < n} \bigl\llVert \llVert
X_i - Z\rrVert \bigr\rrVert _2.\nonumber
\end{eqnarray}
%
Let $\Delta(n):= \mathbf{E} [\|X_n - Z\|^2]$ and define
%
\begin{eqnarray}
\label{eqdef-Ln} L_n &=& \mathbf{E} \biggl[ \biggl( \frac{\mu(I^{(0)}_n)}{\mu(n)}
\biggr)^2 \biggr] + \mathbf{E} \biggl[ \biggl( \frac{\mu(I^{(1)}_n)}{\mu
(n)}
\biggr)^2 \biggr].
\end{eqnarray}

Since $I^{(0)}_n \uparrow\infty$ almost surely, and $I_n^{(0)}$ and
$\Psi$ are independent by (O3), Lemma~\ref{lemconvunif} implies
that as $n\to\infty$,
\[
\bigl\llVert X_{I_n^{(0)}} ( \Psi )- Z ( \Psi ) \bigr\rrVert _2
\to0.
\]
Let $\varepsilon_n$ be a sequence tending to zero as $n\to\infty$ such
that, for all $n\ge1$,
\[
\bigl\llVert X_{I_n^{(0)}} ( \Psi )- Z ( \Psi ) \bigr\rrVert _2
\le\varepsilon_n \quad\mbox{and}\quad\mathbf{E} \bigl[\|Q_n-Z
\| ^2\bigr]\le \varepsilon_n.
\]
By (O2), (O3) and the fact that $\Psi$ is uniformly distributed
on the unit interval,
Lemma~\ref{lemconvunif} together with (O2) and (\ref{eqxnqn}) yields
\[
\mathbf{E} \bigl[\|X_n - Q_n\|^2\bigr] \leq
L_n \cdot\sup_{i< n} \Delta(i) + 2 \varepsilon
_n \cdot\sup_{i< n} \Delta(i)^{1/2}.
\]
Altogether, we have for every $n\ge1$
%
\begin{eqnarray}
\Delta(n) &\leq& \mathbf{E} \bigl[\|X_n - Q_n
\|^2\bigr] + \mathbf{E} \bigl[\|Q_n - Z \|^2
\bigr]
\nonumber\\[-8pt]\label{intriangle0} \\[-8pt]
&&{} + 2 \sqrt{\mathbf{E} \bigl[\| X_n - Q_n
\|^2\bigr] \cdot\mathbf{E} \bigl[\| Q_n - Z \|
^2\bigr]}\nonumber
\\
&\leq& L_n \cdot\sup_{i < n} \Delta(i) + 2
\varepsilon_n \cdot\sup_{i
<n} \Delta(i)^{1/2}
+ \varepsilon_n
\nonumber\\[-8pt]\label{intriangle} \\[-8pt]
&&{} + 2 \sqrt{\varepsilon_n} \Bigl( L_n
\cdot \sup_{i < n} \Delta(i) + \varepsilon_n \cdot\sup
_{i < n} \Delta(i)^{1/2} \Bigr)^{1/2}.\nonumber
\end{eqnarray}
Now, by the bounded convergence theorem, $L_n\to q:=\mathbf{E}
[(1-(V-U))^{2\beta}] + \mathbf{E} [(V-U)^{2\beta}]$ as $n\to\infty
$. Thus,
since $2\beta>1$, $L_n$ eventually drops below one for $n$ sufficiently
large, and it easily follows that $\Delta(n)$ is bounded.

To prove that $\Delta(n)\to0$, let $K:= \sup_n \Delta(n)$ and $a:=
\limsup_n \Delta(n)$.
Then let $\delta> 0$ be arbitrary and choose $\ell$ large enough such that
$\Delta(n) \leq a + \delta$ for $n \geq\ell$. This $\ell$ being fixed,
let $n_0\ge\ell$ be large enough such that for $n\ge n_0$ one has
${\mathbf P}(I_n^{(i)} \le\ell) < \delta/ K$ for $i = 0, 1$. Then,
combining (\ref{intriangle0}) and the bound (\ref{ungl10}),
conditioning on the value
of $I_n^{(0)}$ and $I_n^{(1)}$, respectively, and splitting the
integrand into the cases $\{I_n^{(0)} \leq\ell\}$ and $\{I_n^{(0)} >
\ell\}$ and similarly for $I_n^{(1)}$, we obtain for all $n\ge n_0$
%
\[
\Delta(n) \leq2 \delta+ L_n (a+ \delta) + 2 \varepsilon_n
K^{1/2} + \varepsilon_n + 2 \sqrt{\varepsilon_n}
\bigl( L_n \cdot K + \varepsilon_n \cdot K^{1/2}
\bigr)^{1/2}.
\]
%
First letting $n \to\infty$ and then $\delta\downarrow0$, we obtain
$a \leq q a$. The fact that $q <1$ implies that $a = 0$, so that
$\Delta
(n)=\mathbf{E} [\|X_n-Z\|^2]\to0$ as $n\to\infty$, which completes
the proof.
\end{pf*}

Finally, it remains to prove the convergence at a uniform point stated
in Lemma~\ref{lemconvunif}, which is the true cornerstone of our
argument. 
%
\begin{pf*}{Proof of Lemma~\ref{lemconvunif}}
We proceed along the same lines as in the process case relying on
arguments that have already been used in the construction of the limit
in Section~\ref{seclimit}. First, we clearly have $\|Q_n(\xi) -
Z(\xi)
\|_2 \rightarrow0$ as the term is bounded by $\| \|Q_n - Z\| \|_2$
which was shown to vanish asymptotically in (\ref{eqqnz}).
Let $W_0 = K_0(\xi, U,V)$ and $W_1 = K_1(\xi, U,V)$. Then, by the
recursions (\ref{eqxns}) and (\ref{eqqns}) for $X_n(s)$ and $Q_ n(s)$
taken at $s=\xi$, we have
%
\begin{eqnarray}\label{xnqn3}
&&\mathbf{E} \bigl[\bigl|X_n(\xi) - Q_n(\xi)\bigr|^2\bigr]\nonumber
\\
&&\qquad \le\mathbf{E} \biggl[ \biggl( \frac{\mu(I^{(0)}_n)}{\mu(n)} \biggr)^2
\bigl(X^{(0)}_{I_{n}^{(0)}}(W_0) - Z^{(0)}
(W_0) \bigr) ^2 \biggr]\nonumber
\\
&&\quad\qquad{}  + \mathbf{E} \biggl[ \biggl(
\frac{\mu(I^{(1)}_n)}{\mu(n)} \biggr)^2 \bigl(X^{(1)}_{I_{n}^{(1)}}(W_1)
- Z^{(1)}(W_1) \bigr)^2 \biggr]
\\
&&\quad\qquad {}+ 2\mathbf{E} \biggl[\mathbf{1}_{ \{ U \leq\xi< V \} } \frac{\mu(I^{(0)}_n)}{\mu(n)}
\frac {\mu(I^{(1)}_n)}{\mu(n)}\nonumber
\\
&&\hspace*{61pt}{}\times  \bigl(X^{(0)}_{I^{(0)}_n} ( \Psi )-
Z^{(0)} ( \Psi ) \bigr) \bigl(X^{(1)}_{I_{n}^{(1)}}(W_1)
- Z^{(1)}(W_1) \bigr) \biggr].\nonumber 
\end{eqnarray}
To handle these terms, we use another property which can be seen as an
extension of (O3).
\begin{enumerate}[(O4)]
\item[(O4)] For any $n \in\mathbb{N}$, we have independence of
$I^{(0)}_n, W_0, ((X^{(0)}_m)_{m\geq1}, Z^{(0)})$. 
Moreover, on $\{U < \xi< V\}$, the quantities $(I^{(0)}_n, I^{(1)}_n)$
and $((X_m^{(0)})_{m \geq1},\break  Z^{(0)})$, $((X_m^{(1)})_{ m\geq1},
Z^{(1)}$), $\Psi, W_1$ are independent.
\end{enumerate}
Using (O2) and (O4), conditioned on the values of $I^{(0)}_n$
and $I^{(1)}_n$,
one sees that the mixed term in (\ref{xnqn3}) vanishes since
$\mathbf{E} [X_n(\xi)] = \mathbf{E} [Z(\xi)] = \kappa\mathrm
{B}(\beta+ 1, \beta
+1)$ for all
$n \geq1$ and $\mu(0) = 0$.

From there, again using (O2), (O4) and the fact that $W_0, W_1$
are uniformly distributed on the unit interval for the first two terms
in (\ref{xnqn3}), we see that
%
\begin{equation}
\label{eqrecuniformLn} \mathbf{E} \bigl[\bigl|X_n(\xi) - Q_n(
\xi)\bigr|^2\bigr] \leq L_n \cdot\sup_{i <n}
\mathbf{E} \bigl[\bigl|X_{i}(\xi ) - Z (\xi) \bigr|^2\bigr],
\end{equation}
where $L_n$ is the quantity defined in (\ref{eqdef-Ln}).
As before, (\ref{eqrecuniformLn}) above implies that the sequence
$\mathbf{E} [|X_n(\xi)-Z(\xi)|^2]$ is bounded. The claim then
follows from the
same arguments (even simpler) as at the end of the proof of
Theorem~\ref{thmconvunif}, starting from~(\ref{xnqn3}) and using again the fact
that the mixed term there equals zero. We omit the details.
\end{pf*}

%
\begin{rem*}
The fact that the mixed terms in (\ref{xnqn3}) above vanish
is crucial since at this point we have otherwise no control on the
first moment. Moreover, the mixed terms vanish only when looking at the
uniform location $\xi$: more precisely, one could not use this argument
directly at a fixed location $s$ because for fixed $s$, $\Psi$ and
$(s-U)/(V-U)$ are not independent. In other words, there is no obvious
shortcut in our argument and it seems that there is no way around
showing convergence at a uniform point first.
\end{rem*}

\subsection{Uniform convergence in $L^m$, \texorpdfstring{$m\ge2$}{m>=2} and almost sure convergence}\label{secLm}

%
\begin{cor} \label{corhighermom}
For any $m \in\mathbb{N}$, we have $\mathbf{E}  [\|X_n - Z\|^m
] \rightarrow0$.
\end{cor}

\begin{pf} 
First note that Theorem~\ref{thmconvunif} implies $\|X_n - Z \|
\rightarrow0$ in probability.
Moreover, by Theorem~\ref{thmfixas}, $\mathbf{E}  [\|Z\|^m
] < \infty$ for all $m
\geq1$. The inductive argument used to prove $\sup_{n \geq1} \mathbf
{E}  [\| Z_n\|^m  ] < \infty$ for all $m$ based on
inequality (\ref{ineqhighermoments}) can be worked out similarly to
show $\sup_{n \geq1} \mathbf{E}  [\|X_n\|^m  ] < \infty$
for all $m$, and we omit
the details.
\end{pf}
%

Taking
more care on the error terms in the proof of Theorem~\ref{thmconvunif}, we can prove the following rate of convergence in
$L^2$, which is the
key to the proof of the almost sure uniform convergence of $X_n$ to
$Z$. Here and subsequently, we use the big-O Landau symbols for
sequences of time parameter $n$ as $n \to\infty$.

\begin{lem}\label{lemconvunifrate}
Let $\xi$ be uniform in $[0,1]$, and independent of $(X_n)_{n\ge1}$
and $Z$. Then, for any $\kappa< 2 \beta-1$, we have $\mathbf{E}
[|X_n(\xi) - Z(\xi)|^2] = O(n^{-\kappa})$ as $n\to\infty$.
\end{lem}

\begin{pf}Let $\operatorname{Bin}(n,p)$ have binomial distribution
with parameters
$n,p$. Using standard concentration results for the binomial
distribution, it is easy to see that for any $\varrho> 0$,
%
\begin{equation}
\label{boundcher} \mathbf{E} \biggl[\biggl\llvert \frac{\operatorname{Bin}(n,p)}{n} - p \biggr
\rrvert ^{\varrho } \biggr] = O\bigl(n^{-\varrho/2}\bigr),
\end{equation}
uniformly in $p \in[0,1]$. The difference between the limit $Z$ and
the accompanying sequence $Q_n$ is easily bounded: first using the fact
that $|x^\beta- y^\beta| \leq|x-y|^\beta$ in the right-hand side of
(\ref{eqqnz}) and then using the bound (\ref{boundcher}), we obtain
%
\begin{equation}
\label{eqboundqnzrate} \mathbf{E} \bigl[\|Q_n - Z \|^2\bigr] = O
\bigl(n^{-\beta}\bigr).
\end{equation}

Let $d(n) = \mathbf{E} [|X_n(\xi) - Z(\xi)|^2]$. Then, using (\ref{xnqn3}), we
obtain (recall that the mixed term equals zero)
\begin{eqnarray*}
d(n) &\leq& \mathbf{E} \bigl[\bigl|X_n(\xi) - Q_n(
\xi)\bigr|^2\bigr] + \mathbf{E} \bigl[\bigl|Q_n(\xi ) -Z(
\xi)\bigr|^2\bigr]
\\
&&{} + 2 \bigl\| X_n(\xi) - Q_n(\xi)
\bigr\|_2 \cdot\bigl\|Q_n(\xi) - Z(\xi)\bigr\|_2
\\
&\leq& \mathbf{E} \biggl[ \biggl( \frac{\mu(I^{(0)}_n)}{\mu(n)} \biggr)^2 d
\bigl(I_n^{(0)}\bigr) \biggr] + \mathbf{E} \biggl[ \biggl(
\frac{\mu(I^{(1)}_n)}{\mu(n)} \biggr)^2 d\bigl(I_n^{(1)}
\bigr) \biggr]
\\
&&{} + O\bigl(\bigl\| X_n(\xi) - Q_n(\xi)
\bigr\|_2 \cdot n^{-\beta/2}\bigr) + O\bigl( n^{-\beta}\bigr).
\end{eqnarray*}
Fix $\kappa< 2\beta- 1$ and let $r_n:=\sup\{d(i) \cdot(i\vee
1)^\kappa\dvtx  0\le i\le n-1\}$.
Then the inequality above implies that for all $n$ we have
\[
d(n) \leq r_n n^{-\kappa} \ell_n +
C_3 n^{-\beta/2}
\]
for some constant $C_3$ and
%
\begin{equation}
\label{eqdefln} \qquad \ell_n:=\mathbf{E} \biggl[ \biggl( \frac{\mu(I^{(0)}_n)}{\mu
(n)}
\biggr)^2 \biggl( \frac{I_n^{(0)}\vee1}{n} \biggr)^{-\kappa} \biggr] +
\mathbf{E} \biggl[ \biggl( \frac {\mu(I^{(1)}_n)}{\mu
(n)} \biggr)^2 \biggl(
\frac{ I_n^{(1)}\vee 1}{n} \biggr)^{-\kappa} \biggr].
\end{equation}
By the bounded convergence theorem, one has as $n\to\infty$
\[
\ell_n \to\mathbf{E} \bigl[\bigl(1-(V-U)\bigr)^{2\beta-\kappa} \bigr]
+ \mathbf{E} \bigl[(V-U)^{2\beta -\kappa} \bigr]<1,
\]
since our choice for $\kappa$ ensures that $2\beta-\kappa>1$.
Thus, there exists $\gamma>0$ and $n_0$ large enough such that for all
$n\ge n_0$ one has $\ell_n\le1-\gamma$. Now, let $n_1\ge n_0$ be large
enough such that $r_{n_0} \gamma n^{-\kappa}> C_3 n^{-\beta/2}$ for all
$n\ge n_1$, which is possible since $2\beta-1<\beta/2$. An easy
induction on $n$ then shows that for all $n\ge n_1$ we have
\[
d(n)=\mathbf{E} \bigl[\bigl|X_n(\xi)-Z(\xi)\bigr|^2\bigr] \le
r_{n_0} n^{-\kappa}
\]
as desired.
\end{pf}

%
\begin{lem} \label{lemxnzL2}
For any $\kappa< 2 \beta-1$, we have $\mathbf{E} [\|X_n - Z\|^2] =
O(n^{-\kappa})$.
\end{lem}

\begin{pf}
As in the proof of Theorem~\ref{thmconvunif}, we abbreviate $\Delta(n)
= \mathbf{E} [\|X_n - Z\|^2]$ and recall inequality (\ref{intriangle0})
%
\begin{eqnarray} \label{intriangle01}
\mathbf{E} \bigl[\|X_n-Z\|^2\bigr] 
&\le&\mathbf{E} \bigl[\|X_n-Q_n
\|^2\bigr]+ \mathbf{E} \bigl[\|Q_n-Z\|^2
\bigr]
\nonumber\\[-8pt]\\[-8pt]
&&{}   + 2 \sqrt {\mathbf{E} \bigl[\|X_n-Q_n
\|^2\bigr] \cdot\mathbf{E} \bigl[\|Q_n-Z\|^2
\bigr]}.\nonumber
\end{eqnarray}
%
We have already seen in the course of the proof of Lemma~\ref{lemconvunifrate}, equation (\ref{eqboundqnzrate}) that $\mathbf
{E} [\| Q_n - Z \|^2] = O(n^{-\beta})$. To bound the terms involving
$\mathbf{E} [\| X_n-Q_n\|^2]$, we use equation (\ref{ungl10}) from
the proof of
Theorem~\ref{thmconvunif}.

Fix $\kappa< 2\beta- 1$ and $\kappa'$ such that $\kappa< \kappa' <
2\beta-1$. Combining (\ref{intriangle01}), (\ref{ungl10}), using
Cauchy--Schwarz inequality to decouple the mixed term and applying
Lemma~\ref{lemconvunifrate}, we obtain
\begin{eqnarray*}
\Delta(n) &\leq& \mathbf{E} \biggl[ \biggl( \frac{\mu(I^{(0)}_n)}{\mu(n)}
\biggr)^2 \Delta \bigl(I_n^{(0)}\bigr) \biggr] +
\mathbf{E} \biggl[ \biggl( \frac{\mu(I^{(1)}_n)}{\mu(n)} \biggr)^2 \Delta
\bigl(I_n^{(1)}\bigr) \biggr]
\\
&&{} + O \Bigl( \sqrt{\mathbf{E} \bigl[\Delta\bigl(I_n^{(1)}
\bigr)\bigr] \cdot\mathbf{E} \bigl[\bigl(I_n^{(0)}\vee 1
\bigr)^{-\kappa'}\bigr]} \Bigr)
\\
&&{}  + O\bigl( \sqrt{\mathbf{E} \bigl[\|
X_n - Q_n \|^2\bigr]} \cdot n^{-\beta/2}
\bigr)+ O\bigl(n^{-\beta}\bigr).
\end{eqnarray*}
This recurrence relation is almost identical to that in the proof of
Lemma~\ref{lemconvunifrate}, and we only indicate how to deal with
the extra term coming from the mixed term of~(\ref{ungl10}). Write
$R_n:=\sup\{\Delta(i)\cdot(i\vee1)^\kappa\dvtx  i<n\}$. Then
\[
\mathbf{E} \bigl[\Delta\bigl(I_n^{(1)}\bigr)\bigr] \le
R_n n^{-\kappa} \mathbf{E} \biggl[ \biggl(\frac {I_n^{(1)}\vee 1}n
\biggr)^{-\kappa} \biggr].
\]
Since $\kappa,\kappa'<1$, a standard application of a truncation
argument, bounded and monotone convergence theorems imply that there
exists a constant $C_4$ such that 
%
\[
\mathbf{E} \biggl[ \biggl(\frac{I_n^{(1)}\vee1}n \biggr)^{-\kappa} \biggr]\le
C_4 \quad \mbox{and}\quad\mathbf{E} \biggl[ \biggl(\frac{I_n^{(0)}\vee
1}n
\biggr)^{-\kappa'} \biggr]\le C_4.
\]
Thus, there exists a constant $C_5$ such that for all $n$ large enough,
\[
\Delta(n) \le R_n \ell_n n^{-\kappa} +
C_5 n^{-\beta/2} + C_5\sqrt{R_n}
n^{-(\kappa+ \kappa')/2},
\]
where $\ell_n$ is the quantity defined in (\ref{eqdefln}).
From here, the claim that $\Delta(n) = O (n^{-\kappa})$ follows by yet
another induction on $n$ using the same arguments as above, and we omit
the details.
\end{pf}

%
\begin{prop} \label{propratetransprocess} For any $m\ge1$, and for
any $\delta<\beta-1/2$, we have $\mathbf{E} [\|X_n-Z\|^m]=O(n^{-m
\delta})$.
\end{prop}

\begin{pf}Again, we introduce the intermediate sequence $(Q_n)_{n\ge
1}$. We proceed by induction on $m\ge1$. Lemma~\ref{lemxnzL2}
implies that $\|\|X_n-Z\|\|_1\le\|\|X_n-Z\|\|_2 =O(n^{-\kappa/2})$,
for any $\kappa<2\beta-1$ so that the claim holds for the first two
moments. We suppose now that for every $k<m$, and every $\delta\in
(0,\beta-1/2)$, $\mathbf{E} [\|X_n-Z\|^k]=O(n^{-k \delta})$; so we
will prove
the claim for all $\delta$ at once.

Now fix $\delta\in(0,\beta-1/2)$, and pick $\eta\in(\delta, \beta
-1/2)$. Note that the arguments used to prove that $\mathbf{E} [\|
Q_n-Z\| ^2]=O(n^{-\beta})$ also yield that for any $k\ge1$,
$\mathbf{E} [\|Q_n-Z\|^k]=O(n^{-k\beta/2})$. 
Write $\Delta_k(n):=\mathbf{E} [\|X_n-Z\|^k]$. By the induction hypothesis,
there exists constants $K_k$, $k=1,\ldots,m-1$, such that $\Delta
_k(n)\le
K_k (n\vee1)^{-k\eta}$ for every $n\ge0$.

Then, expanding the moments using the bound $\|X_n-Z\|\le\|X_n-Q_n\|
+\|Q_n-Z\|$, we obtain
\begin{eqnarray*}
\Delta_m(n) &\le&\sum
_{k=0}^m \binom m k \cdot\mathbf{E} \bigl[
\|X_n-Q_n\| ^{m-k} \cdot\| Q_n-Z\|
^{k} \bigr]
\\
&\le&\sum_{k=0}^m \binom m k \cdot
\mathbf{E} \bigl[\|X_n-Q_n\|^m
\bigr]^{1-k/m} \cdot\mathbf{E} \bigl[\| Q_n-Z\|^m
\bigr]^{k/m},
\end{eqnarray*}
where the second inequality follows from H\"older's inequality (for
$1\le k<m$, but one sees that the inequality is also valid when $k=0$
or $k=m$). Therefore, we have, for some constant $C_6$,
%
\begin{equation}
\label{eqrecDeltamn} \Delta_m(n) \le C_6 \cdot\sum
_{k=0}^m \binom m k n^{-k\beta/2} \cdot
\mathbf{E} \bigl[\|X_n-Q_n\|^m
\bigr]^{1-k/m}.
\end{equation}

We now reexpress the term $\mathbf{E} [\|X_n-Q_n\|^m]$ in terms of
$\Delta
_{k}$, $k\in\{1,\ldots, m\}$ as follows:
\begin{eqnarray*}
&& \mathbf{E} \bigl[\|X_n - Q_n\|^m\bigr]
\\
&&\qquad \le \mathbf{E} \biggl[ \biggl( \frac{\mu(I^{(0)}_n)}{\mu(n)} \biggr)^m \bigl\llVert
X^{(0)}_{I_{n}^{(0)}} - Z^{(0)} \bigr\rrVert ^m
\biggr] + \mathbf{E} \biggl[ \biggl( \frac{\mu (I^{(1)}_n)}{\mu(n)} \biggr)^m
\bigl\llVert X^{(1)}_{I_{n}^{(1)}} - Z^{(1)}\bigr\rrVert
^m \biggr]
\\
&&\quad\qquad{} + \sum_{k=1}^{m-1} \binom m k \mathbf{E}
\biggl[ \biggl( \frac{\mu
(I_n^{(0)})}{\mu(n)} \bigl\llVert X^{(0)}_{I^{(0)}_n}-
Z^{(0)}\bigr\rrVert \biggr)^k
\\
&&\hspace*{102pt}{}\times  \biggl(
\frac{\mu(I_n^{(1)})}{\mu(n)} \bigl\llVert X^{(1)}_{I_{n}^{(1)}} -
Z^{(1)} \bigr\rrVert \biggr)^{m-k} \biggr]
\\
&&\qquad \le \mathbf{E} \biggl[ \biggl( \frac{\mu(I_n^{(0)})}{\mu(n)} \biggr)^m \Delta
_{m}\bigl(I_n^{(0)}\bigr) \biggr] + \mathbf{E}
\biggl[ \biggl( \frac{\mu(I_n^{(1)})}{\mu(n)} \biggr)^m \Delta _{m}
\bigl(I_n^{(1)}\bigr) \biggr]
\\
&&\quad\qquad{}+ \sum_{k=1}^{m-1} \binom m k \mathbf{E}
\biggl[ \biggl( \frac{\mu
(I_n^{(0)})}{\mu(n)} \biggr)^k \Delta_k
\bigl(I_n^{(0)}\bigr) \cdot \biggl( \frac{\mu(I_n^{(1)})}{\mu(n)}
\biggr)^{m-k}\Delta_{m-k}\bigl(I_n^{(1)}
\bigr) \biggr].
\end{eqnarray*}
The induction hypothesis then implies that, for every $n\ge0$, the
last sum above is at most
\begin{eqnarray*}
&& n^{-m \eta} \sum_{k=1}^{m-1} \binom m
k K_k K_{m-k} \mathbf{E} \biggl[ \biggl( \frac {\mu(I_n^{(0)})}{\mu(n)}
\biggl(\frac{I_n^{(0)}\vee
1}n \biggr)^{-\eta} \biggr)^k
\\
&&\hspace*{121pt}{}\times
\biggl( \frac{\mu
(I_n^{(1)})}{\mu(n)} \biggl(\frac {I_n^{(1)}\vee1}{n} \biggr)^{-\eta
}
\biggr)^{m-k} \biggr].
\end{eqnarray*}
But since $\mu(n)\sim c n^{-\beta/2}$ and $\eta<\beta-1/2<\beta
/2$, the
almost sure convergence of $(I_n^{(0)}/n, I_n^{(1)}/n)$ implies that
the expected values above are all bounded, which implies that one
actually has
\begin{eqnarray*}
&& \mathbf{E} \bigl[\|X_n-Q_n\|^m\bigr]
\\
&&\qquad \le
\mathbf{E} \biggl[ \biggl( \frac{\mu
(I_n^{(0)})}{\mu (n)} \biggr)^m
\Delta_{m}\bigl(I_n^{(0)}\bigr) \biggr] +
\mathbf{E} \biggl[ \biggl( \frac{\mu(I_n^{(1)})}{\mu (n)} \biggr)^m
\Delta_{m}\bigl(I_n^{(1)}\bigr) \biggr] +
C_7 n^{-m\eta}
\end{eqnarray*}
for some constant $C_7$. Let $R_n^{(m)}:=\sup\{ i^{m\delta} \Delta_m(i)
\dvtx  i< n\}$ so that we have
\[
\mathbf{E} \bigl[\|X_n-Q_n\|^m\bigr] \le
R^{(m)}_n n^{-m \delta} \ell_n^{(m)}
+ C_7 n^{-m
\eta},
\]
where, as before,
\[
\ell_n^{(m)}:=\mathbf{E} \biggl[ \biggl(
\frac{\mu(I^{(0)}_n)}{\mu
(n)} \biggl( \frac{I_n^{(0)}\vee1}{n} \biggr)^{-\delta}
\biggr)^m \biggr] + \mathbf{E} \biggl[ \biggl( \frac{\mu(I^{(1)}_n)}{\mu(n)}
\biggl( \frac{ I_n^{(1)}\vee 1}{n} \biggr)^{-\delta} \biggr)^m \biggr]<1-
\gamma
\]
for some $\gamma>0$ and all $n$ large enough.

From there, the same arguments we used before allow us to treat the
recurrence relation in (\ref{eqrecDeltamn}), and to conclude that
$C_n$ is actually bounded. We omit the details, but just note that
although the main term in the right-hand side of (\ref{eqrecDeltamn}) is the one for $k=0$, the others cannot be dropped
earlier or one would not be able to prove a rate better than $n^{-\beta
/2}$, regardless of $m$.
\end{pf}

Let $m$ be large enough such that $m (\beta-1/2)>2$. Then by Markov's
inequality, for any $\varepsilon>0$ and all $n$ large enough, we have
\[
{\mathbf P} \bigl(\|X_n-Z\|>\varepsilon \bigr)\le\varepsilon^{-m} \cdot
\mathbf{E} \bigl[\|X_n-Z\|^m\bigr] \le
\varepsilon^{-m} n^{-2}.
\]
It follows that, for any $\varepsilon>0$, we have
$\sum_{n\ge1} {\mathbf P} (\|X_n-Z\|>\varepsilon )<\infty$,
so that by the Borel--Cantelli lemma $\|X_n-Z\|\to0$ almost surely as
$n\to\infty$.
Together with Theorem~\ref{thmfixas}, Proposition~\ref{propunique}
and Corollary~\ref{corhighermom} this shows Theorem~\ref{thmmain} and
the first part of Theorem~\ref{thmdualtree}.

%
\begin{rem*}
In order to obtain almost sure convergence of $n^{-\beta/2} C_n$ rather
than convergence in probability we have transferred rates of
convergence for the coefficients in the recursive decomposition to the
convergence of the sequence of interest by induction. This is a
standard approach in the context of the contraction method particularly
in function spaces, where convergence rates (with respect to more
elaborate probability metrics) are necessary in order to deduce
functional limit theorems on a distributional level; see \cite{NeSu12}
for details.
\end{rem*}

\subsection{Convergence of the lamination} \label{secconvlam}
In this section, we complete the proof of Theorem~\ref{thmdualtree} by
showing that the process convergence in Theorem~\ref{thmmain} implies
convergence of the lamination $\mathfrak L_n$ to $\mathfrak L_Z$. Note
that, in
general, it is not sufficient that $f_n\to f$ uniformly on $[0,1]$ for
$\mathfrak L_{f_n}$ to converge to $\mathfrak L_f$, and we need
additional arguments.
We recall from \cite{legalcu}, Definition 2.1, that a lamination
$\mathfrak L
$ is called \emph{maximal} if for any $x,y \in\mathscr C$ with
$\llbracket x,y \rrbracket
\notin\mathfrak L$, the chord $\llbracket x,y \rrbracket$ intersects
at least one of the
chords in $\mathfrak L$. In other words, $\mathfrak L$ cannot be
enlarged by the
addition of other chords.
Le Gall and Paulin \cite{LePa2008a} have proved that for a geodesic
lamination of the hyperbolic disk encoded by a continuous function $g$
to be maximal it suffices that $g$ has distinct local minima on the
open interval $(0,1)$; the setting here is not exactly the same, but
the statement is easily adapted and we omit the details (see also
Proposition~2.5 in \cite{legalcu}). Maximal laminations $\mathfrak L$ coincide
with triangulations of the disk, that is laminations in which every
connected component of $\mathscr D\setminus\mathfrak L$ is an open
triangle whose
endpoints lie on the circle $\mathscr C$.

Let $\mathbb{L}$ denote the set of laminations of the disk which
contain only finitely many chords and satisfy the additional properties
that no chord has zero as an endpoint and that distinct chords do not
share a common endpoint.

%
\begin{lem}\label{lemconvlam}
Let $(\mathfrak L_n)_{n \geq0}$ be an increasing sequence in $\mathbb{L}$,
and let $f_n$ be a~function encoding $\mathfrak L_n$ in the sense that
$\mathfrak L
_n=\mathfrak L_{f_n}$. Suppose that $f_n\to g$ uniformly as $n \to
\infty$
where $g$ is continuous on $[0,1]$. Then
%
\begin{equation}
\label{converse} \mathfrak L_g \subseteq\overline {\bigcup
_{n
\geq1} \mathfrak L_n}.
\end{equation}
Moreover, if $\mathfrak L_g$ is maximal, then $\mathfrak L_n\to
\mathfrak L_g$ as $n\to\infty
$ for the Hausdorff metric.
\end{lem}

One can certainly not have $\mathfrak L_g=\lim_n \mathfrak L_n$
without any
additional assumption such as maximality. To see this, consider, for
instance, the case in which the scaling factors used to ensure
convergence of $f_n$ grow too fast so that $g(s)=0$ for all $s\in
[0,1]$; then the lamination $\mathfrak L_g$ is actually always empty.

We do not have a short argument why local minima of the limit function
$Z$ are almost surely distinct. Thus, we refer to the \cite{legalcu}, Corollary~5.3, for a direct proof of the inclusion
%
\[
\overline{\bigcup_{n \geq1} \mathfrak L_n}
\subseteq\mathfrak L_g.
\]
Together with Lemma~\ref{lemconvlam}, this finishes the proof of
Theorem~\ref{thmdualtree}.
\begin{pf*}{Proof of Lemma~\ref{lemconvlam}}
We first show that $\mathfrak L_g \subseteq\overline{\bigcup_{n \geq
1} \mathfrak L_n}$.
Consider a chord $\llbracket x,y \rrbracket\subset\mathfrak L_g$
with $x< y$ and assume that it is compatible with $g$. By definition
and continuity of $g$, one has $g(s) > g(x)=g(y)$ for all $s \in
(x,y)$. Let $0 < \varepsilon< (y-x)/3$ and $\delta= \inf_{s \in
[x+\varepsilon, y - \varepsilon]} g(s) - g(x)$. Choose $n$ large enough such
that $\|f_n - g \| < \delta/6$. Then, pick $\gamma\in(1/6,1/3)$ such
that at every point of discontinuity $s\in[x,y]$ of $f_n$, we have
$f_n(s)\neq g(x)+(1-\gamma)\delta$; this is possible since $f_n$ has at
most finitely many jumps.
For all $s\in[x+\varepsilon, y-\varepsilon]$, we have $f_n(s)>g(x)+\delta
(1-\gamma)$ and $f_n(x),f_n(y)<g(x)+\gamma\delta$. Let 
%
\begin{eqnarray*}
a_n&=:=&\inf\bigl\{a\le x+\varepsilon\dvtx  f_n(s)>g(x)+(1-
\gamma)\delta\ \forall s\in\bigl(a,x+\varepsilon]\bigr\},
\\
b_n&=:=&\sup\bigl\{b\ge y-\varepsilon\dvtx  f_n(s)>g(x)+(1-
\gamma)\delta\  \forall s\in[y-\varepsilon, b\bigr)\bigr\}.
\end{eqnarray*}
%
Then $a_n\in[x,x+\varepsilon]$, $b_n\in[y-\varepsilon, y]$ and for all
$s\in(a_n,b_n)$ we have $f_n(s)>g(x)+(1-\gamma)\delta$ and $\max\{
f_n(a_n),f_n(b_n)\}\le g(x)+(1-\gamma)\delta$. By the choice of~$\gamma
$, we have $f_n(a_n-)=f_n(a_n)$, and $\llbracket a_n,b_n \rrbracket$ is
$f_n$-compatible. It follows that $\llbracket a_n,b_n \rrbracket
\subset\mathfrak L_n$ and
that, by construction,
%
\[
\llbracket x,y \rrbracket\subset\bigcup_{n \geq1}
\mathfrak L_n^{2\varepsilon},
\]
where we recall that $A^\varepsilon$ denotes the $\varepsilon$-fattening of
$A$ in $\mathscr D$, $\{x\in\mathscr D\dvtx  \exists a\in A$ and
$|x-a|<\varepsilon\}$.
Letting $\varepsilon\downarrow0$ shows
$\llbracket x,y \rrbracket\subset\overline{\bigcup_{n \geq1}
\mathfrak L_n}$. In the case
that $\llbracket x,y \rrbracket$ is a limit of compatible chords one
applies a similar
argument to the sequence of \mbox{$g$-}compatible chords $(\llbracket x_k,y_k
\rrbracket, k\ge
1)$ such that $\llbracket x_k,y_k \rrbracket\to\llbracket x,y
\rrbracket$.
Together, this gives $\mathfrak L_g \subseteq\overline{\bigcup_{n
\geq1} \mathfrak L_n}$.

If $\mathfrak L_g$ is maximal, then we cannot have $\mathfrak
L_g\subsetneq\overline
{\bigcup_{n\ge1} \mathfrak L_n}$, since $\overline{\bigcup_{n\ge1}
\mathfrak L_n}$
is indeed a lamination. It follows that $\mathfrak L_g= \overline
{\bigcup_{n\ge1} \mathfrak L_n}$, which completes the proof.
\end{pf*}

\section{The dual of the homogeneous lamination} \label{seclimith}

In this section, we treat the case of the homogeneous lamination
process, in which the chords are added to a uniformly random fragment,
regardless of its mass. In this case, where the index of the
fragmentation is $\alpha=0$, the limit process is different from $Z$,
which is the common limit process to all fragmentations with a positive
index $\alpha$ \cite{legalcu}.

Let us first give a precise description of the process.
As before, $(U_v, V_v)_{v \in\mathcal T}$ denotes a collection of
independent random
variables with density\break $2\mathbf{1}_{ \{ 0< u <v <1 \} }$.
Independently of this set,
let $(J_n)_{n \geq1}$ be a sequence of independent random variables where,
for each $i \geq1$, $J_i$ has uniform distribution on $\{1, \ldots,
i\}$.
For $n =1$, insert $U = U_\varnothing$, $V = V_\varnothing$ and split the
disk into
fragments $S^{(0)}, S^{(1)}$ just as in the case $\alpha=2$. At time $n$,
we choose an arbitrary labeling of the $n$ different available
fragments $S^{(v_1)},\ldots, S^{(v_n)}$ and insert a chord in the fragment
$S^{(v_{J_n})}$. Here, writing $c$ for the mass of the fragment (the
one-dimensional Lebesgue measure of its intersection with the circle),
the insertion
is performed by choosing the endpoints to be given by the vector $(cU_{v_{J_n}},
cV_{v_{J_n}})$ with respect to the fragment (the origin
of the local coordinates is placed at the point corresponding to the
unique chord which separates $S^{(v_{J_n})}$ from its ancestor in the
dual tree).


The recursive decomposition for $C_n^h$ looks as in the case of $\alpha
= 2$, only the splitting random variable $(I_n^{(0)}, I_n^{(1)})$ has a
different distribution. [We use the same notation for the pair
$(I_n^{(0)}, I_n^{(1)})$ as in the\vspace*{1.5pt} case $\alpha=2$ for the sake of
readability.] With
$(C_n^{h,(0)})_{n \geq0}, (C_n^{h,(1)})_{n \geq0}$ defined
analogously to the case $\alpha= 2$ (remember the beginning of
Section~\ref{secconv} for details), we have for every $s\in[0,1]$
%
\begin{equation}\label{eqreccn2}
C_n^h(s)= \cases{
\displaystyle C^{h,(0)}_{I^{(0)}_n} \biggl( \frac{s}{1-(V-U)} \biggr),\qquad\mbox{if $s\le U$,}
\vspace*{5pt}\cr
\displaystyle C^{h,(0)}_{I^{(0)}_n} \biggl(\frac{U}{1-(V-U)} \biggr) + 1+ C^{h,(1)}_{I^{(1)}_n} \biggl( \frac{s-U}{V-U} \biggr),
\vspace*{3pt}\cr
\hspace*{127pt}\mbox{if $U<s\le V$,}
\vspace*{5pt}\cr
\displaystyle C^{h,(0)}_{I^{(0)}_n} \biggl(\frac{s-(V-U)}{1-(V-U)} \biggr),\qquad\mbox{if $ s> V$.}}
\end{equation}
Note that the vector $(I_n^{(0)}, I_n^{(1)})$ is a measurable function
of $(J_i)_{i = 1,\ldots, n}$, and thus independent of $U$,$V$,
$(C^{h,(0)}_n)_{n \geq1}$, and $(C^{h,(1)}_n)_{n \geq1}$. Moreover,
$I_n^{(0)}+\break I_n^{(1)}= n-1$ and, since the underlying fragmentation is a
Yule process, $I_n^{(1)}$ is uniform on $\{0, \ldots, n-1\}$.

As it has been observed by Curien and Le~Gall \cite{legalcu}, equation
(\ref{eqreccn2}) implies that the limit process $\mathscr H$
satisfies a
fixed-point equation in distribution:
let $\mathscr H^{(0)}$, $\mathscr H^{(1)}$ denote independent copies of
$\mathscr H$ such
that $(\mathscr H^{(0)}, \mathscr H^{(1)})$, $(U,V)$, $W$ are
independent, $(U,V)$
has density $2 \mathbf{1}_{ \{ 0 \leq u \leq v \leq1 \} }$ and $W$ is
uniformly
distributed on the unit interval. Then the process defined by, for
every $s\in[0,1]$,
%
\begin{equation}\label{eqrecH}
\cases{\displaystyle W^{1/3} \mathscr H^{ (0)} \biggl(\frac{s}{1-(V-U)} \biggr),\qquad\mbox{if $ s<U$,}
\vspace*{5pt}\cr
\displaystyle W^{1/3}\mathscr H^{(0)} \biggl( \frac{U}{1-(V-U)} \biggr) +(1-W)^{1/3} \mathscr H ^{(1)} \biggl( \frac{s-U}{V-U}\biggr),
\vspace*{3pt}\cr
\hspace*{148.5pt}\mbox{if $ U\le s<V$,}
\vspace*{5pt}\cr
\displaystyle W^{1/3} \mathscr H^{(0)} \biggl( \frac{s- (V-U)}{1-(V-U)} \biggr),\qquad\mbox{if $ s\ge V$,}}
\end{equation}
is distributed like the original process $\mathscr H$. Furthermore,
Curien and Le~Gall \cite{legalcu}, Section~8.1, prove that the limit
process $\mathscr H$ satisfies
%
\begin{equation}
\label{eqmeanhn} \mathbf{E} \bigl[\mathscr H(s)\bigr] = \kappa^{h}
\bigl(s(1-s)\bigr)^{1/2}
\end{equation}
for some constant $\kappa^{h} > 0$.

The techniques we have used in the case $\alpha= 2$ apply here, and
allow us to prove
the convergence of the dual tree $T_n^{h}$ in the Gromov--Hausdorff sense
(Theorem~\ref{thmdualtree0}). The limit process which is constructed
using our
functional approach is denoted by $H$, and is almost surely equal to
the process
$\mathscr H$ constructed in \cite{legalcu}.

%
\begin{theorem} \label{thmmain2}
As $n \to\infty$, we have
\[
\mathbf{E} \bigl[\bigl\| n^{-1/3} C_n^{h} - H
\bigr\|^m\bigr] \rightarrow0
\]
for all $m \in\mathbb{N}$. 
Moreover, for every $s\in[0,1]$ we have
\[
\mathbf{E} \bigl[H(s) \bigr] = \kappa^{h} \bigl(s(1-s)
\bigr)^{1/2}\qquad \mbox{where }\kappa ^{h} =
\frac{24}{\pi\Gamma(1/3)}.
\]
%
\end{theorem}
%
Again, Theorem~\ref{thmmain2} is much stronger than what is needed to
prove Theorem~\ref{thmdualtree0}, and implies convergence of all
moments of the height of the dual trees. Also, as in the self-similar
case discussed in Sections~\ref{seclimit} and~\ref{secconv}, the
leading constant $\kappa^{h}$ is identified using the asymptotic
expansion of $C_n^h$ at an independent random location~$\xi$.

%
\begin{theorem}\label{thmhunif}
Let $\xi$ be a uniform random variable, independent of all remaining
quantities. For every $n\ge1$, we have
\[
\mathbf{E} \bigl[C_n^{h}(\xi)\bigr] = \frac{\Gamma(n+4/3)}{\Gamma(4/3)n!}
-1 = \frac
{n^{1/3}}{\Gamma(4/3)} - 1 + O\bigl(n^{-2/3}\bigr).
\]
\end{theorem}


The remainder of the section is dedicated to the proofs of the previous
statements. However, since the techniques are essentially the same we
have already used in Sections~\ref{seclimit} and~\ref{secconv}, we
omit many details.

\subsection*{Mean at a uniform location} As in the case of the
self-similar lamination, the leading constant is identified using the
asymptotics
for $C_n^h$ at a uniformly random point $\xi$, independent of the
lamination. Write $Y_n^h:= C_n^h(\xi)$. Then we have
\[
Y_n^h \stackrel{d} {=} Y^h_{I_n^{(0)}} +
\mathbf{1}_{ \{ U \leq\xi<
V \} } \bigl(\widehat{Y^{h}}_{I_n^{(1)}} + 1
\bigr),
\]
where $(\widehat{Y^{h}}_n)_{n \geq1}$ is independent copy of
$(Y_n^h)_{n \geq1}$ and $(U,V)$, $(I_n^{(0)}$, $I_n^{(1)})$, $(Y_n^h)_{n
\geq1}$, $(\widehat{Y_n^h})_{n \geq1}$ are independent. Let now $\mu(n):= \mathbf{E} [Y_n^h]$. Taking expected values in the relation above yields
\[
\mu(n) = \frac{1} 3 + \frac{4}{3n} \sum
_{k =1}^{n-1} \mu(k).
\]
Elementary manipulations yield
\[
\frac{\mu(n)}{n+1} - \frac{ \mu(n-1)}{n} = \frac{1-2 \mu(n-1)}{3n(n+1)},
\]
%
so that
\[
\bigl(\mu(n)+1\bigr) = \bigl(\mu(n-1)+1\bigr) \biggl(1 + \frac{1}{3n}
\biggr),
\]
which implies the exact formula for $\mu(n)$. The expansion follows by
Stirling approximation.

The proofs of the remaining statements of Theorem~\ref{thmmain2} run
along very
similar lines as in the case $\alpha= 2$. In order to bound the
supremum of the process in $L^p$, we need to choose some
$p\ge1$ such that $\mathbf{E} [W^{p/3}+(1-W)^{p/3}]<1$.
Here, the coefficients $W^{1/3}, (1-W)^{1/3}$ are considerably larger than
$(1-(V-U))^\beta$, \mbox{$(V-U)^\beta$}. For this reason, $p=2$ is not
sufficient and
it is necessary to work with $p=4$.
However, note that the one-dimensional fixed-point equation for
$Y^h = H(\xi)$ arising from (\ref{eqrecH}) is
%
\begin{equation}
\label{eqhmeanrec} Y^h \stackrel{d} {=} W^{1/3} Y^h
+ \mathbf{1}_{ \{ U \leq\xi< V \} } (1-W)^{1/3} \widehat{Y^h}.
\end{equation}
Similarly to (\ref{eqfixunif}), $\widehat{Y^h}$ is distributed like
$Y^h$ and $Y^h, \widehat{Y^h}, W, \xi, (U,V)$ are independent.
Here, contraction in $L^2$ is guaranteed since the second coefficient
is substantially decreased by an independent Bernoulli variable with
success probability~$1/3$.

%
\begin{rem*}
Although the Brownian excursion has the same mean function
(see~\ref{eqmeanhn}), one easily verifies that $H=\mathscr H$ is not a
Brownian excursion,
and hence that $\mathcal T_H$ is not the Brownian continuum random tree (CRT).
We use the recursive equation (\ref{eqhmeanrec}) for $\mathbf{E}
[H(\xi)]$ to
show the
law of a standard Brownian excursion $(\mathbf e(s))_{s\in[0,1]}$ is not
invariant by
the transformation in (\ref{eqrecH}). If it were true $\mathbf{E}
[\mathbf e(\xi )^2]$ would
equal
%
\begin{equation}
\label{eqbrownianexc} \tfrac{10}{3} \mathrm{B} \bigl( \tfrac{4}3,
\tfrac{4}3 \bigr) \mathbf {E} \bigl[\mathbf e(\xi)\bigr]
\end{equation}
for a uniformly distributed random variable $\xi$ that is independent
of $\mathbf e$. However, as $\mathbf e(\xi)$ has the
standard Rayleigh distribution, we have $\mathbf{E} [\mathbf e(\xi)]
= \sqrt {\pi}/2$
and $\mathbf{E} [\mathbf e(\xi)^2] = 2$ which does not match the
value in
(\ref{eqbrownianexc}).
Further information about $H(\xi)$ may be obtained using the homogeneous
lamination process in continuous time: we find the following
characterization of
$H(\xi)$:
\[
H(\xi) \cdot E^{1/3} \stackrel{d} {=} E.
\]
Here, $E$ denotes an exponential random variable with mean one,
independent of $H$
and $\xi$.
As we do not draw further implications from this identity, we do not
give more
details about its proof here.
\end{rem*}

\subsection*{Construction of the limit} We need an additional
sequence of independent uniformly distributed random variables $(W_v)_{
v \in\mathcal T}$ that is independent of $(U_v,V_v)_{v \in\mathcal
T}$. Let $W = W_\varnothing$.
Define the operator $G^h\dvtx  A^+ \times(0,1) \times\mathcal C_0([0,1])^2
\to\mathcal C_0([0,1])$ by
\[
G^h[u,v,w;f_0,f_1](s)= \cases{
\displaystyle w^{1/3} f_0 \biggl( \frac{s}{1-(v-u)}\biggr),\qquad\mbox{if $s<U$,}
\vspace*{5pt}\cr
\displaystyle w^{1/3} f_0
\biggl( \frac{u}{1-(v-u)} \biggr) + (1-w)^{1/3} f_1 \biggl(
\frac{s-u}{v-u} \biggr),
\vspace*{3pt}\cr
\hspace*{126pt}\mbox{if $U\le s<V$,}
\vspace*{5pt}\cr
\displaystyle w^{1/3} f_0 \biggl( \frac{s- (v-u)}{1-(v-u)} \biggr),\qquad\mbox{if $s\ge V$.}}
\]
%
For every node $u\in\mathcal T$, let $H_0^{(u)}= \kappa^{h}
(s(1-s))^{1/2}$. Then define
recursively
\[
H_{n+1}^{(u)}=G^h\bigl[U_u,
V_u, W_u; H_n^{(u0)},
H_n^{(u1)}\bigr],
\]
%
or equivalently,
\[
H_{n+1}^{(u)}(s) = W_u^{1/3}
H_n^{(u0)}\bigl(K_0(s, U_u,V_u)
\bigr) + (1-W_u)^{1/3} H_n^{(u1)}
\bigl(K_1(s, U_u,V_u)\bigr), 
\]
%
where the functions $K_0, K_1$ are defined in (\ref{defK0K1}).
Finally, define $H_n=H_n^\varnothing$ to be the value observed at the
root of $\mathcal T$.
In order to prove uniform convergence of $H_n$ we investigate $\mathbf
{E} [\| H_{n+1} - H_n \|^4]$.
The analogue of (\ref{recmeansq}) involves three different kinds of
mixed terms and applying the $L^p$ inequality to any of them, we arrive at
%
\begin{equation}
\label{eql4} \Delta_n \leq q \Delta_{n-1} + 14
q' \bigl(\Theta_{n-1} ^{3/4} \Delta
_{n-1}^{1/4}+ \Theta_{n-1} ^{1/2}
\Delta_{n-1}^{1/2} + \Theta_{n-1} ^{1/4}
\Delta_{n-1}^{3/4}\bigr).
\end{equation}
Here, we used the abbreviations
\begin{eqnarray*}
\Theta_n &=& \mathbf{E} \bigl[\bigl(H_n(\xi)-
H_{n-1}(\xi)\bigr)^4 \bigr],
\\
\Delta_n&=& \mathbf{E} \bigl[\|H_{n+1}- H_n
\|^4 \bigr],
\\
q &=& \mathbf{E} \bigl[W^{4/3}\bigr] + \mathbf{E} \bigl[(1-W)^{4/3}
\bigr] = 6 / 7 < 1,
\\
q' &=& \mathbf{E} \bigl[W (1-W)^{1/3}\bigr] + \mathbf{E}
\bigl[W^{1/3} (1-W)\bigr] + \mathbf{E} \bigl[\bigl(W (1-W)
\bigr)^{2/3}\bigr].
\end{eqnarray*}
Note that $\Delta_n$ and $q$ correspond to the analogous terms in the
case $\alpha=2$ where we omit the superscript $h$ here. By the same arguments used to prove Lemma~\ref{lemonedim}, we
can show $\mathbf{E} [(H_n(\xi)- H_{n-1}(\xi))^2] \rightarrow0$
exponentially fast.
The following lemma whose proof is postponed is the necessary
additional ingredient proving uniform convergence.

\begin{lem} \label{leml4}Let $\bar q = \mathbf{E} [W^{2/3} + \mathbf
{1}_{ \{ U \leq\xi< V \} } (1-W)^{2/3}] = \frac{4} 5$.
For any $p \in\mathbb{N}$, $p \geq2$, we have 
%
\[
\mathbf{E} \bigl[\bigl|H_n(\xi)- H_{n-1}(\xi)\bigr|^p
\bigr] = O\bigl(\bar q^{n}\bigr).
\]
\end{lem}

Applying the lemma to the right-hand side of (\ref{eql4}) and using
the arguments
in the proof of Theorem~\ref{thmfixas} yields that $H_n$ converges
uniformly almost
surely and in~$L^4$ to a limit denoted by $H$. (We recall that
$H=\mathscr H$ with
probability one.)

\begin{pf*}{Proof of Lemma~\ref{leml4}}
As already mentioned,
\[
\mathbf{E} \bigl[\bigl(H_n(\xi)- H_{n-1}(\xi)
\bigr)^2 \bigr] = O\bigl(\bar q^n\bigr)
\]
can be verified by the same arguments as in the case $\alpha=2$. By
Jensen's inequality, we have  $\mathbf{E}  [|H_n(\xi)-
H_{n-1}(\xi)|  ] \leq
C_8\bar q^{n/2}$ for some constant $C_8 > 0$. For transferring the rate
to higher moments, we proceed by induction.
Let $p \geq3$ and assume the assertion is true for all $2 \leq j \leq
p-1$, that is, let $K_2, \ldots, K_{p-1}$ such that $\mathbf{E}
[|H_n(\xi)- H_{n-1}(\xi)|^j  ] \leq K_j \bar q^n$ for all $2
\leq j \leq p-1$. By the
observation in Lemma~\ref{lemonedim}, denoting by $\xi', \xi''$
independent random variables with uniform distribution that are
independent of all remaining quantities, we have
\begin{eqnarray*}
\Delta_{n}^{(p)}&:=& \mathbf{E}
\bigl[\bigl|H_n(\xi)- H_{n-1}(\xi)\bigr|^p \bigr]
\nonumber
\\
&=& \mathbf{E} \bigl[\bigl\llvert W^{1/3} \bigl(H^{(0)}_{n-1}
\bigl(\xi'\bigr)- H^{(0)}_{n-2}\bigl(
\xi'\bigr)\bigr)
\\
&&\hspace*{13pt}{} +\mathbf{1}_{ \{ U \leq\xi< V \} } (1-W)^{1/3}
\bigl(H^{(1)}_{n-1}\bigl(\xi{''}\bigr)-
H^{(1)}_{n-2}\bigl(\xi{''}\bigr)\bigr)
\bigr\rrvert ^p \bigr]
\\
& \leq&\mathbf{E} \bigl[W^{p/3} + \mathbf{1}_{ \{ U \leq\xi< V \}
}(1-W)^{p/3}
\bigr] \Delta_{n-1}^{(p)} + \sum_{\ell=1}^{p-1}
\pmatrix{p \cr\ell} K_\ell K_{p-\ell} \bar q^{(3/2) (n-1)}.
\end{eqnarray*}
%
Note that $\mathbf{E} [W^{p/3} + \mathbf{1}_{ \{ U \leq\xi< V \}
}(1-W)^{p/3}] = \frac{4}{p+3}
< \bar q$. Thus, by a simple induction on $n$, we obtain $\Delta
_{n}^{(p)} = O( \bar q^n)$.
\end{pf*}

\subsection*{The discrete process}
Let us give the coupling of the discrete process to its limit: for $u
\in\mathcal T$
and $n \in\mathbb{N}$, let $I_n^{(u)}$ be the number of fragments $v
\in
\mathcal{T}$ at
time $n$ with $v \in\mathcal T_u$. Here, $\mathcal T_u=\{u v\dvtx  v\in
\mathcal T
\}$ is the
set of nodes with prefix $u$. It is well known that the proportion
$I_n^{(u0)} /
\max( I_n^{(u)},1) $ converges almost surely to a uniform random
variable as
$n \to\infty$. We denote this limit by $W_u$. Then the sequence $(W_v)_{v
\in\mathcal T}$ is independent of the set $(U_v, V_v)_{v \in\mathcal
T} $
and consists of independent random variables having uniform distribution.
Based on these sets, for $u \in\mathcal T$, let $H^{(u)}$ be the process
constructed above.

The uniform convergence of $X_n^h:= C_n^h / (\Gamma(4/3) \mathbf{E}
[C_n^h(\xi )])$ to $H$ can be worked out analogously to
the case
$\alpha=2$ with similar modifications as for limit construction. The
only essential difference is the verification of
%
\begin{equation}
\label{l4oneconv} \lim_{n \rightarrow\infty} \bigl\llVert X_{I_n^{(0)}}^{h,(0)}(
\Psi) - H^{(0)}(\Psi) \bigr\rrVert _4 = 0
\end{equation}
instead of (\ref{l2one}) in $L^2$. There is no additional difficulty in
proving this $L^4$ convergence: first, convergence of the $L^2$
distance is obtained as in the proof of Theorem~\ref{thmconvunif} and second,
any absolute $p$th moment of $X_n^h(\xi)$ is bounded in $n$. This can
be shown by induction on $p$ using the result for $p =2$ as a base
case. Thus,
(\ref{l4oneconv}) follows by dominated convergence. We leave out the
remaining details of the proof, they should be clear from the arguments in
Section~\ref{secconv}.

%

\subsection*{Rates of convergence and almost sure convergence}
The rates for the convergence of the $L^p$ norm $\mathbf{E} [\|X_n^h -
H \|^p]$
are obtained by the same steps as in the case $\alpha= 2$.
First, note that given $W$, the random variable $I_n^{(0)}$ has
binomial distribution with parameters $n-1, W$. Thus, by the bound
(\ref{boundcher}), for any $k \geq1$,
\[
\mathbf{E} \bigl[\bigl| I_n^{(0)}/n - W \bigr|^{k/3}\bigr] =
O \bigl( n^{-k/6} \bigr),
\]
where we put $W:= W_\varnothing$.
Based on the latter bound for $k=2$, using the same methods as in the
case $\alpha= 2$, it follows that for any $\kappa< \frac{1} 6$:
\[
\mathbf{E} \bigl[\bigl(X_n^h(\xi) - H(\xi)
\bigr)^2\bigr] = O \bigl(n^{-\kappa} \bigr).
\]
Using the same arguments as in the proof of Proposition~\ref{propratetransprocess}, we can easily generalize the result to higher
moments. For any $m \geq1$ and $\kappa< \frac{1} {12}$, we have
\[
\mathbf{E} \bigl[\bigl|X_n^h(\xi) - H(\xi)\bigr|^m
\bigr] = O \bigl(n^{- \kappa m
} \bigr).
\]
As in the proof of Lemma~\ref{lemxnzL2} we can transfer the rate to
the process level. Based on an inequality similar to (\ref{eql4}), we
see that for any $\kappa< \frac{1} 6$,
\[
\mathbf{E} \bigl[\bigl\|X_n^h - H\bigr\|^4\bigr] = O
\bigl(n^{- \kappa} \bigr).
\]
Finally, analogously to Proposition~\ref{propratetransprocess}, we can
show that for any $m \geq1$ and \mbox{$\kappa< \frac{1}{24}$,}
\[
\mathbf{E} \bigl[\bigl\|X_n^h - H\bigr\|^m\bigr] = O
\bigl(n^{- \kappa m} \bigr).
\]
The almost sure convergence $\|X_n^h - H\| \to0$ follows as in the
case $\alpha=2$ by an application of the Borel--Cantelli lemma relying
on the
latter display for sufficiently large $m$. This implies almost sure
convergence of $n^{1/3} C_n^h$.
\section{Properties of the limit dual tree \texorpdfstring{$\mathcal{T}_Z$}{TZ}}\label{secpropdual}

In this section, we derive some important properties of the limit dual
tree $\mathcal T_Z$. The first set of properties are standard and characterize
the degrees in $\mathcal T_Z$. As in a discrete tree, for a real tree
$\mathcal T$
and $x\in\mathcal T$, the degree of $x$ in $\mathcal T$ is the number
of connected
components of $\mathcal T\setminus\{x\}$. Points of degree one are called
leaves. A real tree encoded by a continuous excursion $f\dvtx [0,1]\to
[0,\infty)$ comes with a natural probability measure, the push-forward
of Lebesgue measure on $[0,1]$ into the canonical projection $\pi_f\dvtx
[0,1]\to\mathcal T_f$.

%
\begin{prop}\label{proptree-prop}The real tree $(\mathcal T_Z,d_Z)$
is almost
surely compact, binary and has its mass concentrated on the leaves.
\end{prop}

\begin{pf}The compactness is an easy consequence of the fact that
$\mathcal T_Z$ is the image of $[0,1]$ under the canonical projection, which
is almost surely continuous for, for any $x,y\in[0,1]$,
\begin{eqnarray*}
d_Z\bigl(\pi_Z(x), \pi_Z(y)\bigr) & =&
Z(x)+Z(y)-2 \inf\bigl\{Z(s)\dvtx  x\wedge y\le s\le x\vee y\bigr\}
\\
& \le& 2 \sup\bigl\{\bigl|Z(s)-Z(t)\bigr|\dvtx  |s-t|\le|x-y|\bigr\} \to0
\end{eqnarray*}
as $|x-y|\to0$, since $Z$ is uniformly continuous.

Curien and Le~Gall \cite{legalcu}, Proposition 5.4, have proved that
the lamination encoded by $Z$ is almost surely a triangulation, which
implies that $\mathcal T_Z$ has maximal degree at most three with
probability one.

Finally, to prove that the mass measure is concentrated on the leaves,
it suffices to
show that for an independent uniform random variable $\xi$, $\pi
_Z(\xi
)\in\mathcal T_Z$ is
a leaf with probability one. By the rotational invariance, the degree
of $\pi_Z(\xi)$
is distributed like the degree of the root, say $\rho$. Now, since
$Z(s)>0$ for all
$s\in(0,1)$ with probability one (see \cite{legalcu}, proof of Corollary~5.3),
for all points $u,v\in\mathcal T_Z\setminus\{\rho\}$, the path from
$u$ to
$v$ in
$\mathcal T_Z$ does not go through $\rho$, so that $\mathcal
T_Z\setminus\{\rho\}$
has a single
connected component.
\end{pf}

We now look at the fractal dimension of $\mathcal T_Z$. For a metric space
$(X,d)$, we write $N(X,\delta)$ for the smallest size of a covering of
$X$ with balls of radius at most $\delta$. The box-counting dimension
is a law of large numbers for the size of coverings by balls of small
radius. More precisely, when
\[
\liminf_{\delta\downarrow0} \frac{\log N(X,\delta)}{\log(1/\delta)} =\limsup
_{\delta\downarrow0}\frac{\log N(X,\delta)}{\log(1/\delta)},
\]
the common value is called the Minkowski or box-counting dimension and
denote it by $\dim_M(X)$ \cite{Falconer1986,Falconer1990a}.

%
\begin{prop}\label{propbox-counting}Almost surely $\dim_M(\mathcal T
_Z)=1/\beta$.
\end{prop}

\begin{pf}
The upper bound is a simple consequence of the continuity properties of
the sample paths of $Z$. Since $Z$ is $\alpha$-H\"older continuous for
every $\alpha<\beta$ \cite{legalcu}, Theorem~1.1, there exists $C_\alpha<\infty$ almost surely such that for every
$x,y\in[0,1]$ one has
\[
\bigl|Z(x)-Z(y)\bigr|< C_\alpha|x-y|^\alpha.
\]
For $r>0$, fix $\delta>0$ such that $C_\alpha\delta^\alpha=r$. Let
$\pi
_Z$ denote the canonical projection from $[0,1]$ to $\mathcal T_Z$. The
collection $\{ B(\pi_Z(i\delta), 2 r)\dvtx  i=0,\dots, \lfloor\delta
^{-1}\rfloor\}$ is a covering of $\mathcal T_Z$. Indeed, for any point
$u\in
\mathcal T_Z$, there is $x\in[0,1]$ such that $\pi_Z(x)=u$ and
$i\delta\le
x<(i+1) \delta$ for some $i\le\lfloor\delta^{-1} \rfloor$ and by
definition of $\mathcal T_Z$
\begin{eqnarray*}
d_Z\bigl(u,\pi_Z(i\delta)\bigr) &\le& Z(x)+Z(i
\delta)-2 \inf\bigl\{Z(s)\dvtx  x\wedge i\delta\le s\le x \vee i\delta\bigr\}
\\
&\le&2 \sup\bigl\{\bigl|Z(s)-Z(t)\bigr|\dvtx  |s-t|\le\delta\bigr\}
\\
&<& 2 C_\alpha\cdot\delta^\alpha= 2 r.
\end{eqnarray*}
It follows immediately that $N(\mathcal T_Z, 2r)\le\delta^{-1}+1$ which
implies that
\[
\overline{\dim_M}(\mathcal T_Z):= \limsup
_{r\downarrow0} \frac
{\log N(\mathcal T
_Z,2r)}{\log(1/2r)} \le\frac{1} \alpha
\]
for any $\alpha<\beta$. Letting $\alpha\to\beta$ yields the upper bound.

For the lower bound, for every $r>0$ small enough, we exhibit a set of about
$r^{-1/\beta}$ points in $\mathcal T_Z$ in which any two points are at least
at distance $2r$ apart.
To this aim, we work directly with the fixed-point equation for $Z$,
that we can expand
in any way we like in order to exhibit a convenient partition of
$\mathcal T_Z$.
We rely on the fragmentation process underlying the construction of
$\mathcal T_Z$.

We use the standard embedding of the process of chord insertion in
continuous time, and modify slightly
the point of view of Section~\ref{subsecnotset}, in which chords
insertions may fail.
Let $\mathbf X(t)=(X_i(t)\dvtx  i\ge1)$ be the element of
$\ell^1_{\downarrow}:=\{(x_1,x_2,\ldots)\dvtx  x_1\ge x_2\ge\ldots, \sum_{i\ge1} x_1\le1 \}$ representing
the (ordered) sequence of fragment sizes at time $t$ in the
self-similar fragmentation of index $1$
and dislocation measure corresponding to the uniform binary split of
the mass \cite{Bertoin2006}.
[Then at the times of the split events $\tau_j$, $j\ge1$, $\mathbf
X(\tau
_j)$ is distributed like the
ordered sequence of masses of the fragments when $j$ chords have been inserted.]

Choosing one as the index of self-similarity turns out to be especially
convenient:
here, the number of chords $N_t$ at time $t$ is distributed like a
$\operatorname{Poisson}(t)$
random variable. Given $N_t$, $\mathbf X(t)$ is distributed like the ordered
sequence of spacings
created by $N_t$ uniformly random variables in $[0,1]$. So $N_t$ is
concentrated about $t$, and
the number of fragments of mass at least $1/t$ is roughly of order $t$.
More precisely,
writing $(\gamma_i)_{i\ge1}$ for a sequence of i.i.d. exponential
random variables with mean one,
then conditional on $N_t=n$, the collection of masses of the $n+1$
fragments (in random order)
is distributed like
\[
\biggl( \frac{\gamma_1}{\sum_{j=1}^{n+1} \gamma_j}, \ldots, \frac
{\gamma_{n+1}}{\sum_{j=1}^{n+1} \gamma_j} \biggr).
\]
Hence, for any $\delta>0$, $\varepsilon\in(0,1/\beta)$ and
$t=r^{-1/\beta
+\varepsilon}$, writing $A_\delta$ for the event that
$\#\{i\dvtx X_i(r^{-1/\beta+\varepsilon})\ge r^{1/\beta-\varepsilon}\}\ge
\delta
r^{-1/\beta+\varepsilon}$, we have
%
\begin{eqnarray}\label{eqpra}
{\mathbf P} \bigl(A_\delta^c \bigr)
& =& {\mathbf P} \Biggl(\# \Biggl\{i\le N_t\dvtx
\gamma_i \ge r^{1/\beta
-\varepsilon} \sum_{i=1}^{N_t}
\gamma_i \Biggr\}< \delta r^{-1/\beta
+\varepsilon} \Biggr)\nonumber
\\
&\le&{\mathbf P} \biggl(\sum_{1\le i\le t/2}
\mathbf{1}_{ \{ \gamma_i
\ge3 \} }< \delta r^{-1/\beta+\varepsilon} \biggr)
\nonumber\\[-8pt]\\[-8pt]
&&{}  +{\mathbf P}
\biggl(N_t\notin \biggl[\frac{t}2,2t \biggr] \biggr) + {
\mathbf P} \biggl(\sum_{1\le i \le2t} \gamma_i
\ge3t \biggr)
\nonumber
\\
&\le& r^{1/\beta-\varepsilon}\nonumber
\end{eqnarray}
for any $\delta<e^{-3}/2$ and for all $t$ large enough, by classical
large deviations bounds.
We now fix such a value for $\delta$ until the end of the proof.

For each $i\ge1$ and $t\ge0$, $X_i(t)$ is the mass of a subset
$S_i(t)$ of the tree $\mathcal T_Z$. Furthermore, each subset $S_i(t)$
is a connected
subset of $\mathcal T_Z$, and by the recursive
representation of $Z$, the subtree of $\mathcal T_Z$ induced on
$S_i(t)$ is
distributed like a
copy of $\mathcal T_Z$ in which all distances are multiplied by
$X_i(t)^\beta$
(by the fixed-point equation for $Z$).

We now always consider the set of fragments at time $r^{-1/\beta
+\varepsilon}$ for some $r>0$.
Let $I(r)$ be the set of indices corresponding to fragments (at time
$r^{-1/\beta+\varepsilon}$)
which contain a point at distance greater than $r$ from the rest of the tree.
Any covering of $\mathcal T_Z$ by balls of radius at most $r$ needs at least
one center per
element of $I(r)$, so that $N(\mathcal T_Z,r)\ge\# I(r)$. Note that although
the subset of
$\mathcal T_Z$ induced by $S_i$ is a tree, the degree $\deg(i)$ of
fragment $i$
(the number of connected components of $\mathcal T_Z\setminus S_i$) is
not bounded.
In particular, a large height does not guarantee the existence of a
point far from
$\mathcal T_Z\setminus S_i$. However, since the fragments are connected
subsets of a tree,
the average degree of the entire set of fragments is lower than two.
Also, writing $B$ for the event that the number of fragments at time
$r^{-1/\beta+\varepsilon}$
is at most $2r^{-1/\beta+\varepsilon}$, we have
%
\begin{equation}
\label{eqprb} {\mathbf P} \bigl(B^c \bigr)={\mathbf P} \bigl(
\operatorname {Poisson}\bigl(r^{-1/\beta+\varepsilon}\bigr)> 2 r^{-1/\beta+\varepsilon} \bigr)
\le r^{1/\beta-\varepsilon}
\end{equation}
for all $r$ small enough.
If both $A_\delta$ and $B$ occur then the average degree of the set of
fragments of mass at least
$r^{1/\beta+\varepsilon}$ is at most $4\delta^{-1}$. This implies that on
$A_\delta\cap B$
%
\begin{equation}
\label{eqfraclbnumber} \# \bigl\{i\dvtx  X_i\bigl(r^{-1/\beta+\varepsilon}\bigr)\ge
r^{1/\beta-\varepsilon}, \deg (i)\le8\delta^{-1} \bigr\}\ge\frac\delta2
r^{-1/\beta+\varepsilon}.
\end{equation}

For a given fragment $S_i$ of degree at most $8\delta^{-1}$, the
existence of a path of length at least
$16 \delta^{-1} r$ within the fragment implies that there is
a point $u\in S_i$ at distance at least $r$ from $\mathcal T_Z\setminus S_i$.
So, writing $Z_i$ for the height process within $S_i$, we have
\[
\# I(r)\ge\# \bigl\{i\dvtx  X_i\bigl(r^{-1/\beta+\varepsilon}\bigr)\ge
r^{1/\beta
-\varepsilon}, \deg(i)\le8 \delta^{-1}, \|Z_i\|>16
\delta^{-1}r \bigr\}.
\]
Note that the scaling property of $Z$ implies that, for $i$ such that
$X_i\ge r^{1/\beta-\varepsilon}$,
%
\begin{equation}
\label{eqfraclbproba} 
{\mathbf P}\bigl(\|Z_i\|> 16
\delta^{-1}r \bigr) \ge{\mathbf P}\bigl(\|Z\| >16
\delta^{-1} r^{\varepsilon\beta}\bigr)>1/2
\end{equation}
for all $r$ small enough since $\|Z\|>0$ with probability one. Now,
given the sequence of masses $X_i(r^{-1/\beta+\varepsilon})$ at time
$r^{-1/\beta+\varepsilon}$,
the \emph{internal} structure of the fragments
and in particular the processes $Z_i$ are independent.
So, for every $r>0$ small enough, by (\ref{eqfraclbnumber}) and
(\ref{eqfraclbproba}), on the event $A_\delta\cap B$ the random
variable $\#I(r)$ dominates
a binomial random variable with parameters $\frac\delta2 r^{-1/\beta
+\varepsilon}$ and $1/2$.
It follows that, for all $r$ small enough,
%
\begin{equation}
\label{eqgivenab} \mathbf{P} \biggl(\# I(r) \le\frac\delta8 r^{-1/\beta
+\varepsilon} \bigg|
A_\delta, B \biggr) \le\frac{16} \delta r^{1/\beta-\varepsilon}
\end{equation}
by Chebyshev's inequality. Putting (\ref{eqpra}), (\ref{eqprb}) and
(\ref{eqgivenab}) together,
the Borel--Cantelli lemma implies that almost surely for all
$r$ small enough along the sequence $r=2^{-n}$, $n\ge0$, we have
\[
\# I(r) \ge\frac\delta{8} r^{-1/\beta+\varepsilon}.
\]
Finally, since for $r\in[2^{-(n+1)}, 2^{-n})$ we have $N(\mathcal
T_Z,r)\ge
N(\mathcal T_Z,2^{-n})$, we obtain
\[
\underline{\dim_M}(\mathcal T_Z):=\liminf
_{r\downarrow0} \frac
{\log N(\mathcal T
_Z,r)}{\log(1/r)} \ge\liminf_{r\downarrow0}
\frac{\log\#
I(r)}{\log
(1/r)} \ge\frac{1}\beta-\varepsilon,
\]
which proves the desired lower bound since $\varepsilon>0$ was arbitrary.
\end{pf}

\setcounter{equation}{0}
\begin{appendix}
\section*{Appendix: Expected value at a uniform location: Proof of Theorem~\lowercase{\protect\ref{thmunif}}}
\label{secmean} \label{secunif} \label{secuniform}
Our approach is to first find a \emph{explicit} formula for the mean,
then extract precise asymptotics via complex analytic methods.

\subsection*{A recurrence relation}
Under the model of Section~\ref{secconv}, let $Y_n:= C_n(\xi)$. We
start with the derivation of a recurrence relation for the quantity
$\mu
(n):=\mathbf{E} [Y_n]$. We have,
by construction,
%
\begin{equation}
\label{recunif} Y_n \stackrel{d} {=} \mathbf{1}_{ \{ \xi\in S^{(0)} \cap\mathscr C \} }
Y^{(0)}_{I_n^{(0)}} + \mathbf{1}_{ \{ \xi\in S^{(1)} \cap\mathscr C
\} }
\bigl[1+Y^{(0)}_{I_n^{(0)}} + Y^{(1)}_{I_n^{(1)}}
\bigr], \qquad n \geq1,
\end{equation}
with independent copies $(Y^{(0)}_n)_{n \geq0}, (Y^{(1)}_n)_{n \geq
0}$ of $(Y_n)_{n \geq0}$, independent of $(\xi, U,V, I_n^{(0)}, I_n^{(1)})$.
For the definition of $S^{(0)}$ and $S^{(1)}$, see the beginning of
Section~\ref{subsecnotset}. Observe that, given
$\{ \xi\in S^{(0)} \cap\mathscr C\}$, $\ell^{(0)}$, the mass of $S^{(0)}$
and defined in (\ref{deflv}), is distributed as the maximum of three
independent uniform random variables.
Hence, it has the Beta$(3,1)$ distribution with density $3u^2$.
Additionally, both $\ell^{(0)}$ given $\{\xi\in S^{(1)} \cap\mathscr
C\}$,
and $\ell^{(1)}$ given $\{\xi\in S^{(1)} \cap\mathscr C\}$ are distributed
as the second smallest of three independent
uniform random variable.
Therefore, both have Beta$(2,2)$ distribution with density $6u(1-u)$.

Equation (\ref{recunif}) implies that $\mu(n):= \mathbf{E} [Y_n]$ satisfies
the following recurrence relation for all $n\ge1$:
%
\begin{eqnarray} \label{recdumm}
\mu(n) &= & \frac{1} 3 + \frac{2} 3 \sum
_{k=1}^{n-1} \mu(k) \cdot \bigl[ {\mathbf P}\bigl(
\operatorname{Bin}\bigl(n-1,B_{3,1}^2\bigr) = k\bigr)\nonumber
\\
&&\hspace*{78pt} {}  + {\mathbf P}\bigl(\operatorname{Bin}\bigl(n-1,B_{2,2}^2\bigr)
= k\bigr) \bigr]
\\
&= & \frac{1} 3 + \sum
_{k=1}^{n-1} \mu(k) \pi_{n,k},\nonumber
\end{eqnarray}
where we wrote
\[
\pi_{n,k}= \pmatrix{n-1 \cr k} \bigl[ 2\mathrm{B}(k+1, n-k) -
\mathrm{B}(k+3/2, n-k) \bigr].
\]
%

\subsection*{An exact expression for \texorpdfstring{$\mu(n)$}{mu(n)}}
Although it is linear, the recurrence relation~(\ref{recdumm})
involves an unbounded number of terms. We adopt an approach using
generating functions which are particularly adapted. Define $M(z):=
\sum_{n \geq1} \mu(n) z^n$. Since $\mu(n)\le n$ the convergence radius
of $M(z)$ is
exactly one.
In a different but related case, Flajolet et al. \cite{FlGoPuRo1993}
derived a differential equation for $M(z)$ from the recursion (\ref{recdumm}), which is explicitly solvable.
In our case, this does not seem possible, and we follow ideas used by
Chern and Hwang in \cite{ChHw2003} which rely on a differential equation
for the Euler transform of $M(z)$. We start by defining the binomial
and Euler transforms (see \cite{Knuth1973b}, p.~137, or \cite{SeFl1996}, pp.~105--106).
Given a sequence of real numbers $(a(n), n\ge0)$, the binomial
transform $a^*$ of~$a$ is defined by
\[
a^*(n) = \sum_{k = 0}^n \pmatrix{n \cr k}
(-1)^k a(k).
\]
The sequences $a$ and $a^*$ are then dual in the sense that $a=(a^*)^*$.
The binomial transform of a sequence of numbers is related to the Euler
transform of its generating function. Given a function $f\dvtx  \mathbb{C}
\to\mathbb{C}$, analytic in a neighborhood of the origin, define its
Euler transform $f^*$ by
\[
f^*(z) = \frac{1}{1-z} f \biggl( \frac{z}{z-1} \biggr).
\]
Note that $f^*$ is also analytic in some neighborhood of zero.
The function $f^*$ has the crucial property that
its coefficients are given by the binomial transform of the
coefficients of $f$. In particular, $M^*(z) = \sum_{n \geq1} \mu
^*(n) z^n$.

The basic observation is that $\pi_{n,k}$ may be expressed in terms
which relate to binomial transforms. We give an expression for $\pi
_{n,k}$ that may look slightly artificial, but actually exploits the
structure and make the binomial transform explicit:
%
\begin{equation}
\label{eqpink} \pi_{n,k}=\sum_{j = k}^{n-1}
\pmatrix{n-1 \cr j} \pmatrix{j \cr k} (-1)^{j+k} \biggl[\frac{2}{j+1} -
\frac{1} {j+3/2} \biggr].
\end{equation}
To see that (\ref{eqpink}) indeed holds, observe that we have
\begin{eqnarray*}
\pmatrix{n-1 \cr k} \mathrm{B}(k+1, n-k) &= & \pmatrix{n-1 \cr k} \int
_0^1 x^{k} (1-x)^{n-k-1} \,dx
\\
&= & \pmatrix{n-1 \cr k} \sum_{j=0}^{n-k-1}
(-1)^j \pmatrix{n-k-1 \cr j} \int_0^1
x^{k+j } \,dx
\\
&= & \pmatrix{n-1 \cr k} \sum_{j = k}^{n-1}
(-1)^{j+k} \pmatrix{n-k-1 \cr j-k} \frac{1}{j+1}
\\
&= & \sum_{j = k}^{n-1} \pmatrix{n-1 \cr j} \pmatrix{j \cr k} \frac{(-1)^{j+k} }{j+1}.
\end{eqnarray*}
The expression for the second term is obtained in the same way:
\begin{eqnarray*}
\pmatrix{n-1 \cr k} \mathrm{B}(k+3/2, n-k) &= & \sum_{j = k}^{n-1}
\pmatrix{n-1 \cr j} \pmatrix{j \cr k} \frac
{(-1)^{j+k}} {j+3/2},
\end{eqnarray*}
which proves the identify in (\ref{eqpink}).

Using (\ref{eqpink}) and the binomial transform immediately yields
%
\begin{eqnarray}
\sum_{k=1}^{n-1} \mu(k) \pi_{n,k}
&= & \sum
_{j=1}^{n-1} \pmatrix{n-1 \cr j} (-1)^j
\biggl[\frac{2}{j+1} - \frac{1} {j+3/2} \biggr]\mu^*(j) \label{verbin}.
\end{eqnarray}
Having in mind a second application of the binomial transform, this
leads us to define
%
\begin{equation}
\label{defS} S(z) = \sum_{n \geq1} \biggl[
\frac{2} {n+1}-\frac{1} {n+3/2} \biggr] \mu^*(n) z^n.
\end{equation}
The function $S(z)$ is crucial for our approach. 
We will exhibit two connections between $S(z)$ and $M(z)$ which will
finally imply the preannounced differential equation for the Euler
transform $M^*$ of $M$.

Since $M^*$ is analytic at the origin, the formal term-wise integration
makes sense in a neighborhood of zero. Thus, 
(\ref{defS}) yields
%
\begin{equation}
\label{connect1} \frac{d}{dz} \bigl(z^{3/2} S(z)\bigr) =
z^{1/2} M^*(z) + z^{-1/2} \int_0^z
M^*(u)\,du.
\end{equation}
Furthermore, the recurrence relation for $\mu(n)$ in (\ref{recdumm}),
together with (\ref{verbin}) and~(\ref{defS}) imply that
\begin{eqnarray*}
M(z) 
&= & \frac{1}{3} \frac{z}{1-z} +
\frac{z}{1-z} S \biggl( \frac
{z}{z-1} \biggr).
\end{eqnarray*}
In particular, for $w = z / (z-1)$
%
\begin{equation}
\label{connect2} w^{1/2} (w-1) M^*(w) = \tfrac{1}3
w^{3/2} + w^{3/2} S(w).
\end{equation}

Combining (\ref{connect1}) and (\ref{connect2}) provides an
integro-differential equation for $M^*$.
\[
2w(w-1) \frac{d}{dw} M^*(w) + (w-1)M^*(w) - 2 \int_0^w
M^*(u)\,du = w.
\]
Direct comparison of the $n$th coefficient of both sides gives a
one-term recursion for the Binomial transforms $\mu^*(n)$ for $n \geq
2$, which implies
%
\[
\mu^*(n) = -\frac{1} 3 \prod_{j=2}^n
\frac{2j^2 -j-2}{j(2j+1)}.
\]
Finally, using the duality of binomial transforms, we have just proved
the first assertion of Theorem~\ref{thmunif}.

\subsection*{Asymptotic estimate for \texorpdfstring{$\mu(n)$}{mu(n)}}
The representation
for $\mu(n)=\mathbf{E} [C_n(\xi)]$ in~(\ref{eqmun}) involves an alternate
series and is delicate to evaluate asymptotically: although some terms
are exponentially large in absolute value, we know from the
combinatorial setting that $\mu(n)\le n$, so that an approach that
focuses on the individual terms is bound to be rather intricate. For
the asymptotic expansion, we will use methods based on N\"orlund--Rice
integrals which is standard for the calculus of finite differences
\cite
{Norlund1924a,FlSe1995a,FlSe2009a}. We now show the asymptotic
expression in (\ref{constunif}).

First note that, writing $\gamma=(1+\sqrt{17})/4$ and $\bar\gamma
=(1-\sqrt{17})/4$, we have
\begin{eqnarray*}
&& \frac{1} 3 \cdot\prod_{j=2}^k
\frac{2j^2-j-2}{j(2j+1)}
\\
&&\qquad = \frac{1} 3 \cdot\prod_{j=2}^k
\frac{(j-\gamma)(j-\bar\gamma)}{j
(j+1/2)}
\\
&&\qquad =\frac{1} 3 \cdot\frac{\Gamma(k+1-\gamma)}{\Gamma(2-\gamma)} \cdot \frac{\Gamma(k+2-\bar\gamma)}{\Gamma(2-\bar\gamma)} \cdot
\frac{1} {\Gamma(k+1)} \cdot\frac{\Gamma(5/2)}{\Gamma(k+3/2)}=: f(k), 
\end{eqnarray*}
where it is understood that the last line above defines the function
$f$ at the integer $k\ge2$. Thus,
\[
\mu(n)=\sum_{k=1}^n \pmatrix{n \cr k}
(-1)^{k+1} f(k).
\]
The definition of $f$ extends analytically to complex values $z$ for
which none of the arguments of the Gamma functions takes a value in the
nonpositive integers. So, we have
so that writing
%
\begin{eqnarray}
\mu(n)&=&\sum_{k=1}^n \pmatrix{n \cr k}
(-1)^{k+1} f(k)\nonumber
\\
\eqntext{\displaystyle\mbox{with } f(z) = \frac{\sqrt{\pi} \Gamma(z- \gamma+1)\Gamma(z - \bar
\gamma+1)}{4 \Gamma(z+1) \Gamma(z+3/2) \Gamma(2-\gamma) \Gamma
(2-\bar \gamma)}.}
\end{eqnarray}
%
We may apply the results in Section~2 of \cite{FlSe1995a} which yield
the following integral representation for $\mu(n)$
%
\begin{equation}
\label{rice} \mu(n) = \frac{1}{2 \pi i} \int_{\mathscr C}
\frac{(-1)^{n+1} n!}{z
(z-1) \cdots(z-n)} f(z) \,dz,
\end{equation}
where $\mathscr C$ is any positive contour encircling the segment
$[1,n]$, which lies in the domain of analyticity of $f$ and avoids the
nonnegative integers. We take $\mathscr C$ to be the contour consisting
of the vertical line $c+i \ensuremath{\mathbb{R}}$ and loops around
the segment $[1,n]$ from
$c-i\infty$ to $c+i\infty$.
Observe that the integrand in (\ref{rice}) has singularities in the set
$\Re(z) < 1$ at $\gamma-1 - \ell$ and $\bar\gamma-1 - \ell$ for
$\ell\geq0$ and zero.
All these singularities are simple poles. We would like to shift the
vertical portion of the contour integration towards the left in order
to peel off the first pole we meet, thus extracting the main asymptotic
contribution.

It follows that for $0 < c' < \gamma-1$, and $\mathscr C'$ the shift
of the contour $\mathscr C$ which has its vertical part along
$c'+i\ensuremath{\mathbb{R}}$,
we have
%
\begin{eqnarray}\label{eqmunrice}
\mu(n) 
&=& \frac{(-1)^n n! \cdot\operatorname{Res}(f;\gamma-1)}{(\gamma
-1)\cdots
(\gamma- n-1)} 
\nonumber\\[-8pt]\\[-8pt]
&&{}+ \frac{1}{2 \pi i} \int_{c'-i\infty}^{c'+i\infty}
\frac{(-1)^{n} n!}{z
(z-1) \cdots(z-n)} f(z) \,dz,\nonumber
\end{eqnarray}
because, by Stirling formula, $|f(z)|=O(|z|^{-1})$ as $|z|\to\infty$
inside the half-plane $\{\Re(z)\ge c'\}$.
We first estimate the main contribution, which comes from the term
involving the residue $\operatorname{Res}(f;\gamma-1)$ of $f$ at $z =
\gamma
-1$. Using the fact that $(\gamma-1)(1- \bar\gamma) = 1/2$, we
easily obtain
\begin{eqnarray*}
\frac{(-1)^n n! \cdot\operatorname{Res}(f;\gamma-1)}{(\gamma
-1)\cdots(\gamma
- n-1)} 
&= & c
n^{\gamma-1}+O\bigl(n^{\gamma-2}\bigr),
\end{eqnarray*}
where $c$ is the constant defined in (\ref{constunif}).

In the same way, one proves that the remaining term in (\ref{eqmunrice}) is $O(1)$ as $n\to\infty$: one first easily obtains that
\begin{eqnarray*}
&& \biggl\llvert \frac{1}{2 \pi i} \int_{c' - i \infty}^{c'+ i \infty}
\frac
{(-1)^n n!}{z (z-1) \cdots(z-n)} f(z) \,dz\biggr\rrvert
\\
&&\qquad \le\frac{1} {2\pi} \int
_{c'-i\infty}^{c'+i\infty} \frac{\Gamma(n+1)
|\Gamma(1-c')|}{|\Gamma(n-c'+1)|} \frac{|f(z)|}{|z|}
|\,dz|
\\
&&\qquad = O\bigl(n^{c'}\bigr)
\end{eqnarray*}
by Stirling's formula. To prove a bound of $O(1)$, one shifts again the
vertical line to the left at and peels off the next residue,\vspace*{1.5pt} which
happens to be at $z=0$: the residue itself contributes $O(1)$ and the
remainder $O(n^{c''})$ for $c''<0$. We omit the details.
\end{appendix}



%

\printaddresses

\end{document}